\newtheorem{Theorem}{Theorem}[section]
\newtheorem{Proposition}[Theorem]{Proposition} 
\newtheorem{Lemma}[Theorem]{Lemma}
\newtheorem{Definition}[Theorem]{Definition}
\newtheorem{Question}{Question}
\theoremstyle{definition}
\newtheorem{Comment}[Theorem]{Comment}
\def\fsl{\mathfrak{sl}}
\newcommand{\low}{\text{low}}
\newcommand{\wt}{\operatorname{wt}}
\newcommand{\bz}{\mathbb{Z}}
\newcommand{\bc}{\mathbb{C}}
\newcommand{\br}{\mathbb{R}}
\newcommand{\g}{\mathfrak{g}}
\newcommand{\End}{\mbox{End}}
\newcommand{\Flip}{\mbox{Flip}}
\newcommand{\THRIB}{\mathcal{HRIB}}
\newcommand{\HRIB}{\mathcal{HRIB}}
\newcommand{\Ribbon}{\mathcal{RIBBON}}
\newcommand{\DR}{\mathcal{RIBBON}}
\newcommand{\Hopf}{\mathcal{H}}
\newcommand{\cF}{\mathcal{F}}
\newcommand{\rev}{\text{rev}}
\newcommand{\ev}{\text{ev}}
\newcommand{\coev}{\text{coev}}
\newcommand{\qtr}{\widetilde{\text{ev}}}
\newcommand{\coqtr}{\widetilde{\text{coev}}}
\newcommand{\id}{\text{id}}
\newcommand{\Hom}{\text{Hom}}
\theoremstyle{definition}
\begin{document}

\title[The half-twist]{The half-twist for $U_q(\g)$ representations}

\author{Noah Snyder}
\email{nsnyder@math.berkeley.edu}
\address{UC Berkeley, Department of Mathematics\\ Berkeley, CA}

\author{Peter Tingley}
\email{ptingley@math.mit.edu}
\address{Peter Tingley,
MIT dept. of math,
77 Massachusetts Ave,
Cambridge, MA, 02139}

\begin{abstract}

We introduce the notion of a half-ribbon Hopf algebra, which is a Hopf algebra $\Hopf$ along with a distinguished element $t \in \Hopf$ such that $(\Hopf,  R,C)$ is a ribbon Hopf algebra, where $R= (t^{-1} \otimes t^{-1})\Delta(t)$ and $C= t^{-2}$. The element $t$ is closely related to the topological `half-twist', which twists a ribbon by 180 degrees.  We construct a functor from a topological category of ribbons with half-twists to the category of representations of any half-ribbon Hopf algebra.  We show that $U_q(\g)$ is a (topological) half-ribbon Hopf algebra, but that $t^{-2}$ is not the standard ribbon element.  For $U_q(\fsl_2)$, we show that there is no half-ribbon element $t$ such that $t^{-2}$ is the standard ribbon element. We then discuss how ribbon elements can be modified, and some consequences of these modifications. \end{abstract}

\maketitle

\tableofcontents

\section{Introduction}

Let $\DR(S)$ be the category whose morphisms consist of tangles of oriented directed ribbons up to isotopy, each labeled with an element of some set $S$. There is a notion of a ribbon Hopf algebra $\Hopf$ (see for example \cite{CP}), which is related to this topological category by the fact that there is a monoidal functor $\cF'$ from $\DR(\Hopf \text{-rep})$ to the category of representations $\Hopf\text{-rep}$. This allows one to construct invariants of oriented framed links, and from there invariants of ordinary links. 

There is a morphism in $\DR(S)$ which twists a ribbon by 360 degrees, but not one that twists a ribbon by 180 degrees (since negatively oriented objects are not allowed). We propose that one should consider a slightly larger category, denoted $\THRIB(S)$, where this 180 degree twist, the ``half-twist," is allowed. The half-twist can be applied to several ribbons at once, and all morphisms in $\THRIB(S)$ can be constructed out of half-twists, along with various caps and cups.  The following isotopy shows how the crossing in $\THRIB(S)$ is constructed out of the half-twist:

\setlength{\unitlength}{0.7cm}
\begin{equation} \label{main_isotopy}
\begin{picture}(9,2.1)
\put(0,-3){
\begin{picture}(9,6)
\put(0,0){\Dtst}
\put(0,3){\ptwist}
\put(2,3){\ptwist}
\put(0,-0.7){\puuv[U]}
\put(2,-0.7){\puuv[V]}
\put(0,4.7){\puuv[V]}
\put(2,4.7){\puuv[U]}

\put(4.5,2.5){$\simeq$}
\put(5.97,-0.44){\pplaincrossing}
\put(6,3){\uid}
\put(8,3){\uid}
\put(6,-0.7){\puuv[U]}
\put(8,-0.7){\puuv[V]}
\put(6,4.7){\puuv[V]}
\put(8,4.7){\puuv[U]}
\end{picture}}
\end{picture}
\end{equation}
\vspace{1.7cm}

Recall that a ribbon Hopf algebra $\Hopf$ is a Hopf algebra along with two extra features:

$\bullet$ A universal $R$-matrix $R \in \Hopf \otimes \Hopf$. The functor $\cF'$ takes a simple crossing of ribbons labeled $V$ and $W$ to $\Flip \circ R$ acting on $V \otimes W$.

$\bullet$ A central ``ribbon" element $C \in \Hopf$. The functor $\cF'$ takes a 360 degree twist of a ribbon labeled $V$ to $C$ acting on $V$.

\noindent The elements $R$ and $C$ must satisfy various compatibility conditions (see Section \ref{main1}). 

In the present work we define a half-ribbon Hopf algebra to be a ribbon Hopf algebra along with a distinguished element $t$ such that
\begin{enumerate}
\item \label{rtc} $R= (t^{-1} \otimes t^{-1}) \Delta(t)$ 
\item $C=t^{-2}$.
\end{enumerate}
We show that, if $\Hopf$ is a half-ribbon Hopf algebra, then $\cF'$ can be extended to a functor $\cF$ from $\HRIB(S)$ to $\Hopf\text{-rep}$.

Our main interest is the case where $\Hopf$ is the quantized universal enveloping algebra $U_q(\g)$ of a finite dimensional complex simple Lie algebra $\g$. In this case $U_q(\g)$ is actually a topological ribbon Hopf algebra, meaning that $R$ and $C$ only lie in some completion, not in $U_q(\g)$ itself. We define a topological half-ribbon Hopf algebra by allowing $t$ to lie in a completion of $\Hopf$, and show that $U_q(\g)$ has this structure. The main ingredient is a formula for the $R$ matrix of $U_q(\g)$, due to  Kirilov-Reshetikhin \cite{KR:1990} and Levendorskii-Soibelman \cite{LS}, of the form
\begin{equation} \label{XRint}
R= (X^{-1} \otimes X^{-1}) \Delta(X).
\end{equation}
The correspondence to condition (\ref{rtc}) above should be clear, and we show that $X$ is in fact a half-ribbon element for $U_q(\g)$. Interestingly, $X^{-2}$ is not the standard ribbon element, and in fact we show that is certain cases it is not possible to find a half-ribbon element $t$ for $U_q(\g)$ such that $t^{-2}$ is the standard ribbon element. 

In Section \ref{fiveaa} we discuss the different ribbon elements for $U_q(\g)$, and in some small cases we describe exactly which ones arise from half-ribbon elements. We then discuss some consequences of using $X^{-2}$ in place of the usual ribbon element. In particular, it simplifies the correspondence between certain skein theoretic constructions of link invariants and the quantum group constructions of the same objects, essentially by explaining some annoying negative signs that appear in, for example, \cite{Ohtsuki} or \cite{Kuperberg}.

We feel it would be of considerable interest to study which ribbon Hopf algebras can be given the structure of half-ribbon Hopf algebras. It would also be nice to give straightforward conditions on an element $t$ in a general Hopf algebra $\Hopf$, such that $\Hopf$ along with $t$ is a half-ribbon Hopf algebra (that is, $\Hopf$ is a ribbon Hopf algebra, with $R=(t^{-1} \otimes t^{-1}) \Delta(t)$ and  $C=t^{-2}$). At the end of the paper we discuss these and other possible future directions.

\subsection{Acknowledgments}

We would like to thank Joel Kamnitzer, Scott Morrison, and Nicolai Reshetikhin for helpful conversations.  We would also like to thank the anonymous referee for very thorough and helpful comments. Both authors were supported in part by RTG grant DMS-0354321. The second author was also partly supported by the Australian research council grant DP0879951 and NSF grant DMS-0902649.

\section{Conventions} \label{notation}
We first fix some notation. For the most part we follow conventions from \cite{CP}.

$\bullet$ $\g$ is a complex simple Lie algebra with Cartan algebra $ \mathfrak{h} $, and $A = (a_{ij})_{i,j \in I}$ is its Cartan matrix. 

$\bullet$ $ \langle \cdot , \cdot \rangle $ denotes the paring between $ \mathfrak{h} $ and $ \mathfrak{h}^\star $ and $ ( \cdot , \cdot) $ denotes the usual symmetric bilinear form on either $ \mathfrak{h}$ or $ \mathfrak{h}^\star $.  Fix the usual bases $ \alpha_i $ for $ \mathfrak{h}^\star $ and $ H_i $ for $\mathfrak{h}$, and recall that $ \langle H_i, \alpha_j \rangle = a_{ij} $.  

$\bullet$ $ d_i = (\alpha_i, \alpha_i)/2 $, so that $ (H_i, H_j) = d_j^{-1} a_{ij} $.  Let $ B $ denote the matrix $ (d_j^{-1} a_{ij}) $.

$\bullet$ $ q_i = q^{d_i} $.  

$\bullet$ $\rho$ is the element of $\mathfrak{h}^*$ such that $(\alpha_i, \rho)= d_i$ for all $ i $.

$\bullet$ $\rho^\vee$ is the element of $\mathfrak{h}$ such that $\langle \alpha_i, \rho^\vee \rangle=1$ for all $i$. 

$\bullet$ $s_i$ is the element of the Weyl group which is defined by $s_i(\alpha_j)=\alpha_j-\langle \alpha_i, \alpha_j^\vee \rangle \alpha_i$.

$\bullet$ $\theta$ is the diagram automorphism such that $w_0 (\alpha_i) = - \alpha_{\theta(i)},$ where $w_0$ is the longest element in the Weyl group.

$\bullet$ $U_q(\g)$ is the quantized universal enveloping algebra associated to $\g$, generated over $\mathbb{C}(q)$ by $E_i$, $F_i$ for all  $i \in I$, and $K_w$ for $w$ in the co-weight lattice of $\g$. As usual, let $K_i= K_{H_i}.$ We actually must adjoin a fixed $k^{th}$ root of $q$ to the base field, for some $k$ depending on $\g$. This causes no real difficulty, and for the most part we ignore it. For convenience, we recall the exact formula for the coproduct and antipode:
\begin{equation} \label{coproduct}
\begin{cases}
\Delta{E_i}  = E_i \otimes K_i + 1 \otimes E_i \\
\Delta{F_i} = F_i \otimes 1 + K_i^{-1} \otimes F_i \\ 
\Delta{K_i} = K_i \otimes K_i
\end{cases} 
\end{equation}
\begin{equation}
\begin{cases}
S(E_i)=-E_iK_i^{-1} \\
S(F_i)=-K_i F_i \\
S(K_i)=K_i^{-1} 
\end{cases}
\end{equation}

$\bullet$ $[n]= \frac{q^n - q^{-n}}{q-q^{-1}},$ and $X^{(n)} = \frac{X^n}{[n][n-1] \cdots [2]}.$

$\bullet$ $V_\lambda$ is the irreducible representation of $U_q(\g)$ with highest weight $\lambda$, and $v_\lambda$ is a highest weight vector. 

$\bullet$ $P$ is the weight lattice for $\g$ and $Q$ is the root lattice. 

$\bullet$ The standard ribbon element $C$ for $U_q(\g)$ acts on $V_\lambda$ as the constant $q^{-(\lambda, \lambda) - 2 (\lambda, \rho)}$.

$\bullet$ $U_q(\g)$-rep is the category of finite dimensional Type $1$ representations.

$\bullet$ $(\Hopf, R)$ is a quasi-triangular Hopf algebra over a field $F$, where 
\begin{enumerate}
\item $\mu: \Hopf \otimes \Hopf \rightarrow \Hopf$ is multiplication.
\item $\iota:  F \rightarrow \Hopf$ is the unit.
\item $\Delta: \Hopf \rightarrow \Hopf \otimes \Hopf$ is the comultiplication.
\item $\varepsilon: \Hopf \rightarrow F$ is the counit.
\item $S: \Hopf \rightarrow \Hopf$ is the antipode. 
\end{enumerate}

$\bullet$  $\Flip: V\otimes W \rightarrow W \otimes V$ is defined by $\Flip(v \otimes w) = w \otimes v$.  For longer tensor products, define $\rev: V_1 \otimes \cdots \otimes V_k \rightarrow V_k \otimes \cdots V_1$ by $v_1 \otimes \cdots \otimes v_k \mapsto v_k \otimes \cdots \otimes v_1$.

$\bullet$ If $V$ is a representation of a Hopf algebra $H$, then we define the left dual $V^*$ to be the dual vector space with the action $x \circ f (v) = f(S(x)v)$, and the right dual ${}^*V$ to be the dual vector space with the action $x \circ f (v) = f(S^{-1}(x)v)$.  Note that $({}^*V)^* = V = {}^*(V^*)$.

\section{Background}

Much of the motivation for this paper comes from studying an expression for the $R$-matrix 
\begin{equation} \label{Rmake_eq}
R=(X^{-1} \otimes X^{-1}) \Delta(X),
\end{equation}
where $X$ is an element in some completion of $U_q(\g)$.
This was first introduced by Kirilov-Reshetikhin \cite{KR:1990} and Levendorskii-Soibelman \cite{LS}, and has recently proven useful in studying the relationship between the braiding and crystal bases (see \cite{Rcommutor}). In this section we review this formula, and also recall the definition and basic properties of a ribbon Hopf algebra.

\subsection{A completion of $U_q(\g)$} \label{completion}

The element $X$ from (\ref{Rmake_eq}) is not actually in $U_q(\g)$, but only in a completion. In order to be precise, we briefly review the completion of $U_q(\g)$ that we use.

\begin{Definition}
$\widetilde{U_q(\g)}$ is the completion of $U_q(\g)$ in the weak topology generated by all matrix elements of all (finite-dimensional Type $1$) representations. Similarly, $\widetilde{U_q(\g) \otimes U_q(\g)}$ is the completion of $U_q(\g) \otimes U_q(\g)$ in the weak topology defined by all matrix elements of representations $V_\lambda \otimes V_\mu$, for all ordered pairs $(\lambda, \mu)$.
\end{Definition}

 \begin{Theorem} \label{directproduct} \cite{Rcommutor} $\widetilde{U_q(\g)}$ is the direct product of the endomorphism rings of all irreducible representations of $U_q(\g)$. That is:
 \begin{equation}
 \widetilde{U_q(\g)} =  \prod_{\lambda \in \Lambda_+} \End (V_\lambda).
 \end{equation}
 \end{Theorem}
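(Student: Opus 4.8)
The plan is to realize $\widetilde{U_q(\g)}$ as the closure of $U_q(\g)$ inside the product $\prod_{\lambda \in \Lambda_+}\End(V_\lambda)$ and then show that this closure is everything, the key algebraic input being a Jacobson density argument.

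First I would consider the algebra homomorphism $\Phi \colon U_q(\g) \to \prod_{\lambda \in \Lambda_+}\End(V_\lambda)$ sending $x$ to the tuple $(x|_{V_\lambda})_\lambda$ of its actions on the irreducibles. This map is injective because the finite-dimensional Type $1$ representations separate the points of $U_q(\g)$, so the weak topology is Hausdorff. Since every finite-dimensional Type $1$ representation decomposes as a direct sum of the $V_\lambda$, every matrix element of an arbitrary representation is a linear combination of matrix elements of the $V_\lambda$; hence the weak topology is the initial topology for the maps into the various $\End(V_\lambda)$, and each basic neighbourhood constrains only finitely many of the factors. Consequently $\widetilde{U_q(\g)}$ is the inverse limit $\varprojlim_F \Phi_F(U_q(\g))$, where $F$ runs over the finite subsets of $\Lambda_+$, $\Phi_F$ is the induced map to $\prod_{\lambda \in F}\End(V_\lambda)$, and the transition maps for $F \subseteq F'$ are the evident coordinate projections.

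The heart of the argument is to show that each $\Phi_F$ is surjective, which I would prove by the Jacobson density theorem. For a finite set $F = \{\lambda_1, \dots, \lambda_n\}$ of pairwise distinct dominant weights, put $V = V_{\lambda_1}\oplus\cdots\oplus V_{\lambda_n}$. Because $U_q(\g)\text{-rep}$ is semisimple, $V$ is a semisimple module, and each $V_{\lambda_i}$ is absolutely irreducible, so by Schur's lemma $\End_{U_q(\g)}(V) = \prod_{i}\End_{U_q(\g)}(V_{\lambda_i})$ is a product of copies of the base field. The density theorem then identifies the image of $U_q(\g)$ in $\End(V)$ with the bicommutant, i.e. with the commutant of $\End_{U_q(\g)}(V)$, which is exactly $\prod_{i=1}^{n}\End(V_{\lambda_i})$; hence $\Phi_F$ is onto.

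Finally I would assemble the pieces: with every $\Phi_F$ surjective onto the finite product $\prod_{\lambda\in F}\End(V_\lambda)$ and the transition maps equal to the coordinate projections, the inverse limit collapses to the full direct product $\prod_{\lambda\in\Lambda_+}\End(V_\lambda)$, giving the claimed identification. The main obstacle is the density step: one must know that the $V_\lambda$ are not merely irreducible but absolutely irreducible, so that Schur yields only scalar endomorphisms over the base field $\mathbb{C}(q)$ (with its adjoined root of $q$), together with semisimplicity of the category. The remaining, more bookkeeping-type subtlety is checking that completing against matrix elements imposes \emph{no} compatibility relations between distinct factors $\End(V_\lambda)$; this is precisely because each matrix element is local to a single irreducible, which is what lets the inverse limit fill out the entire direct product.
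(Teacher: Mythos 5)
The paper itself gives no proof of this theorem, deferring entirely to \cite{Rcommutor}, and your argument is the standard one used there: identify the completion as an inverse limit over finite sets of dominant weights and use density (equivalently, linear independence of matrix coefficients of inequivalent irreducibles) to see that each finite-level map is surjective onto $\prod_{\lambda\in F}\End(V_\lambda)$. Your proof is correct; the only inputs worth flagging are the ones you already name, namely that finite-dimensional Type $1$ representations separate points of $U_q(\g)$ and that $\End_{U_q(\g)}(V_\lambda)$ is the base field (which follows directly from the one-dimensionality of the highest weight space, so full ``absolute irreducibility'' machinery is not really needed).
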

 
 \begin{Comment}
 It is straightforward to see that $ \widetilde{U_q(\g)}$ is a topological Hopf algebra.
 \end{Comment}

By Theorem \ref{directproduct}, specifying an element of $\widetilde{U_q(\g)}$ is exactly the same as specifying how it acts on each isomorphism class of irreducible representation.  Similarly, if we want to specify an element of $\widetilde{U_q(\g)^{\otimes 2}}$ we just need to say how it acts on every tensor product of any two irreducible representations.

\subsection{A method of constructing commutativity constraints} \label{aut_int}

The following theorem is based on an idea used by Henriques and Kamnitzer to study the crystal commutor \cite{cactus}, and was further developed in \cite{Rcommutor}. The proof is a straightforward exercise, which can be found in \cite{Rcommutor}.

\begin{Proposition} (see \cite[Proposition 3.11]{Rcommutor}) \label{coanti}
Let $Y$ be an invertible element in $\widetilde{U_q(\g)}$ such that the map $C_Y: X \rightarrow Y X Y^{-1}$ restricts to an algebra automorphism of $U_q(\g)$. Then $C_Y$ is a coalgebra anti-automorphism if and only if, for every pair of representations $V$ and $W$, the map
\begin{equation}
\begin{aligned}
\sigma^Y_{V,W}: V \otimes W & \rightarrow W \otimes V \\
v \otimes w &\rightarrow \Flip \circ (Y^{-1} \otimes Y^{-1}) \Delta (Y) v \otimes w
\end{aligned}
\end{equation}
is an isomorphism.
\end{Proposition}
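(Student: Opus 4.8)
The plan is to reduce the biconditional to a single identity in the completion $\widetilde{U_q(\g)^{\otimes 2}}$ and then recognize that identity as exactly the statement that $C_Y$ reverses the coproduct. Throughout write $Z = (Y^{-1} \otimes Y^{-1})\Delta(Y)$, so that $\sigma^Y_{V,W} = \Flip \circ Z$ on $V \otimes W$. Since $Y$ is invertible, $\Delta(Y)$ is invertible and hence so is $Z$; as $\Flip$ is a linear isomorphism, $\sigma^Y_{V,W}$ is \emph{always} a linear isomorphism. Thus the only content of ``$\sigma^Y_{V,W}$ is an isomorphism'' is that it is a map of $U_q(\g)$-representations, an intertwiner being automatically a module isomorphism once it is a linear bijection (its inverse then also intertwines).

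First I would determine when $\sigma^Y_{V,W}$ intertwines. The intertwining condition reads $\sigma^Y_{V,W}\,\Delta(x) = \Delta(x)\,\sigma^Y_{V,W}$, where the right-hand $\Delta(x)$ acts on $V \otimes W$ and the left-hand one on $W \otimes V$. Using the elementary identity $\Flip \circ \Delta(x) = \Delta^{\mathrm{op}}(x) \circ \Flip$ (with $\Delta^{\mathrm{op}} = \Flip \circ \Delta$), one moves the outer $\Flip$ across the $\Delta(x)$ on $W \otimes V$, and after cancelling the invertible $\Flip$ this collapses to $Z\,\Delta(x) = \Delta^{\mathrm{op}}(x)\,Z$ as operators on $V \otimes W$, for every $x \in U_q(\g)$. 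The delicate part here is purely organizational: keeping the two module structures, on $V \otimes W$ and on $W \otimes V$, straight while transporting $\Flip$.

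Next I would invoke Theorem \ref{directproduct}. Because an element of $\widetilde{U_q(\g)^{\otimes 2}}$ is determined by its action on all tensor products $V_\lambda \otimes V_\mu$ of irreducibles, the family of operator identities ``$Z\,\Delta(x) = \Delta^{\mathrm{op}}(x)\,Z$ on $V \otimes W$ for all $V,W$'' is equivalent to the single identity $Z\,\Delta(x)\,Z^{-1} = \Delta^{\mathrm{op}}(x)$ holding in the completion, for all $x$. Finally, substituting $Z = (Y^{-1}\otimes Y^{-1})\Delta(Y)$ and conjugating by $Y \otimes Y$ rewrites this as $\Delta(Y)\Delta(x)\Delta(Y)^{-1} = (Y \otimes Y)\,\Delta^{\mathrm{op}}(x)\,(Y^{-1}\otimes Y^{-1})$, i.e. $\Delta(C_Y(x)) = (C_Y \otimes C_Y)(\Delta^{\mathrm{op}}(x))$, which is precisely the assertion that $C_Y$ is a coalgebra anti-homomorphism; bijectivity is free since $C_Y$ has inverse $C_{Y^{-1}}$, upgrading this to an anti-automorphism. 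The standing hypothesis that $C_Y$ restricts to an algebra automorphism of $U_q(\g)$ enters only to make the phrase ``coalgebra anti-automorphism of $U_q(\g)$'' meaningful: it guarantees $C_Y$ preserves $U_q(\g)$, so that $\Delta(C_Y(x))$ lands in $U_q(\g) \otimes U_q(\g)$.

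I do not expect a genuine obstacle: every step is routine linear algebra and coalgebra bookkeeping. The two points requiring care are getting the order of tensor factors right when relating $\Flip$, $\Delta$ and $\Delta^{\mathrm{op}}$, and the legitimacy of passing between ``holds on every $V \otimes W$'' and ``holds in the completion,'' which is exactly what Theorem \ref{directproduct} supplies.
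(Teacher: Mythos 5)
Your proof is correct and is essentially the argument the paper has in mind: the paper itself omits the proof, deferring to \cite[Proposition 3.11]{Rcommutor}, where the same reduction appears — the map is automatically a linear bijection, intertwining collapses via $\Flip\circ\Delta = \Delta^{\mathrm{op}}\circ\Flip$ to $Z\,\Delta(x) = \Delta^{\mathrm{op}}(x)\,Z$ on all $V\otimes W$, and Theorem \ref{directproduct} converts this to the identity $\Delta(C_Y(x)) = (C_Y\otimes C_Y)(\Delta^{\mathrm{op}}(x))$ in the completion. No gaps; at most you could remark that the counit condition $\varepsilon\circ C_Y=\varepsilon$ follows automatically from this identity and uniqueness of the counit.
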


Given an element $Y$ satisfying the conditions of Theorem \ref{coanti}, we use the notation $\sigma^Y$ to denote the system of isomorphisms $\{ \sigma^Y_{V,W} \}$.

\subsection{The element $X$} \label{makeX}

We now explicitly describe the element $X$ from Equation (\ref{Rmake_eq}). We will need a way to specify a lowest weight vector of $V_\lambda$, depending linearly on a chosen of highest weight vector. We do this using the action of the braid group on $V_\lambda$. We very briefly review this theory, and refer the reader to, for example, \cite[Chapter 8.1.2]{CP} or \cite{L} for more details.

 \begin{Definition}[{see \cite[5.2.1]{L}}] \label{defti} 
 $T_i$ is the element of  $\widetilde{U_q(\g)}$ that acts on a weight vector $v$  by:
 \begin{equation*}
 T_i(v)= \sum_{\begin{array}{c} a,b,c \geq 0 \\ a-b+c=(\wt(v), \alpha_i) \end{array}} (-1)^b q_i^{ac-b}E_i^{(a)} F_i^{(b)} E_i^{(c)} v.
 \end{equation*}
\end{Definition}

By \cite[Theorem 39.4.3]{L}, these $T_i$ generate an action of the braid group on each $V_\lambda$, and thus a map from the braid group to $\widetilde{U_q(\g)}$. This realization of the braid group is often referred to as the quantum Weyl group. It is related to the classical Weyl group by the fact that, for any weight vector $v \in V$, $\wt(T_i(v))= s_i(\wt(v))$.

\begin{Definition}
Let $w_0= s_{i_1} s_{i_2} \cdots s_{i_N}$ be a reduced expression for the longest word $w_0$ in the Weyl group. Then $T_{w_0}=  T_{i_1} T_{i_2} \cdots T_{i_N}$.
\end{Definition}

It is clear that $T_{w_0}$ is well defined because the elements $T_i$ satisfy the braid relations. Note that $T_{w_0}$ interchanges the highest and lowest weight spaces of $V_\lambda$.

\begin{Definition}
$v_\lambda^\low$ is the element in the lowest weight space of $V_\lambda$ defined by
\begin{equation*}
T_{w_0}(v_\lambda^\low)= v_\lambda,
\end{equation*}
where $v_\lambda$ is the chosen highest weight vector.
\end{Definition}

\begin{Definition}
$J$ is the element of $\widetilde{U_q(\g)}$ defined by, for any weight vector $v \in V_\lambda$, 
\begin{equation*}
J(v) = q^{(\wt(v), \wt(v))/2+(\wt(v), \rho)}v.
\end{equation*}
\end{Definition}

\begin{Definition}  \label{defX}
$X=JT_{w_0}$. 
\end{Definition}

\begin{Lemma} \label{Xprops} (see \cite[Section 5.2]{Rcommutor}) The element $X$ has the following properties:
\begin{enumerate}
\item \label{llow} $X(v_\lambda^\low)= q^{(\lambda, \lambda)/2+(\lambda,\rho)} v_\lambda$.

\item \label{lhigh} $X(v_\lambda)= (-1)^{\langle 2\lambda, \rho^\vee \rangle} q^{(\lambda, \lambda)/2+(\lambda,\rho)} v_\lambda^\low$.

\item \label{X2prop} $X^2$ is central, and acts on $V_\lambda$ as multiplication by the scalar $(-1)^{\langle 2\lambda, \rho^\vee \rangle}q^{(\lambda, \lambda) + 2 (\lambda, \rho)}$.  

\item $C_X$ is given by
\begin{equation}
\begin{cases}
C_X(E_i)=-F_{\theta(i)}, \\
C_X(F_i)=-E_{\theta(i)}, \\
C_X(K_i)=K_{\theta(i)}^{-1}.
\end{cases}
\end{equation}
\end{enumerate}
\end{Lemma}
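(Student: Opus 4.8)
The plan is to handle the four statements in the order (i), (iv), (ii), (iii), since the last one will fall out formally once the first three are in hand, and to locate all the genuine work in the scalar appearing in (ii).

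Statement (i) is immediate from the definitions. Since $X=JT_{w_0}$ and $T_{w_0}(v_\lambda^\low)=v_\lambda$ by the very definition of $v_\lambda^\low$, we get $X(v_\lambda^\low)=J(v_\lambda)$, and as $v_\lambda$ has weight $\lambda$ the defining formula for $J$ gives $J(v_\lambda)=q^{(\lambda,\lambda)/2+(\lambda,\rho)}v_\lambda$.

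For (iv) I would factor the conjugation as $C_X=C_J\circ C_{T_{w_0}}$. The conjugation $C_{T_{w_0}}$ is the quantum Weyl group action, which I would take from Lusztig \cite{L}: conjugation by the braid element $T_{w_0}$ realizes the longest Weyl element, so it acts on the torus by $w_0$ (hence $K_{H_i}\mapsto K_{w_0 H_i}=K_{\theta(i)}^{-1}$, using $w_0 H_i=-H_{\theta(i)}$), and it sends $E_i$ to a $K$-dressed multiple of $F_{\theta(i)}$ and $F_i$ to a $K$-dressed multiple of $E_{\theta(i)}$, because $E_i$ has weight $\alpha_i$ and $w_0(\alpha_i)=-\alpha_{\theta(i)}$. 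The remaining piece $C_J$ is diagonal: on an operator of weight $\beta$ it scales by the constant $q^{(\beta,\beta)/2+(\beta,\rho)}$ and inserts the diagonal factor $q^{(\wt,\beta)}$, which is exactly a $K$-element. The one delicate point is to check that these $q$-powers and $K$-factors coming from $J$ cancel the $K$-dressing produced by $T_{w_0}$ and normalize the leading scalar to $-1$, giving precisely $C_X(E_i)=-F_{\theta(i)}$, $C_X(F_i)=-E_{\theta(i)}$, $C_X(K_i)=K_{\theta(i)}^{-1}$; here the identities $(\alpha_i,\rho)=d_i$ and $(\alpha_i,\alpha_i)/2=d_i$ are what make the cancellation work out.

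For (ii) the real content is the scalar $c$ defined by $T_{w_0}(v_\lambda)=c\,v_\lambda^\low$. Granting it, the statement follows by applying $J$: since $v_\lambda^\low$ has weight $w_0\lambda$ with $(w_0\lambda,w_0\lambda)=(\lambda,\lambda)$ and $(w_0\lambda,\rho)=(\lambda,w_0\rho)=-(\lambda,\rho)$ (using $w_0\rho=-\rho$), we obtain $X(v_\lambda)=c\,J(v_\lambda^\low)=c\,q^{(\lambda,\lambda)/2-(\lambda,\rho)}v_\lambda^\low$, so it remains to show $c=(-1)^{\langle 2\lambda,\rho^\vee\rangle}q^{2(\lambda,\rho)}$. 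This is the main obstacle. I would obtain it by rank-one reduction: writing $w_0=s_{i_1}\cdots s_{i_N}$ and applying $T_{i_N},\dots,T_{i_1}$ in turn, each $T_{i_k}$ acts on a single $\fsl$-string and contributes an explicit sign and power of $q$; as the reduced word enumerates the positive roots, the accumulated power assembles into $q^{2(\lambda,\rho)}=q^{(\lambda,\sum_{\alpha>0}\alpha)}$ and the accumulated sign into $(-1)^{\langle 2\lambda,\rho^\vee\rangle}$. (Alternatively this scalar may be quoted directly from \cite[Section 5.2]{Rcommutor}.) As a sanity check, for $\lambda=0$ both sides equal $1$, and for the two-dimensional representation of $U_q(\fsl_2)$ the sign is $-1$.

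Finally (iii) is formal. From (iv) and the fact that $\theta$ is an involution, $C_X^2$ fixes every generator, e.g. $C_X^2(E_i)=C_X(-F_{\theta(i)})=E_{\theta^2(i)}=E_i$, and likewise for $F_i$ and $K_i$, so $C_{X^2}=\id$ on $U_q(\g)$ and $X^2$ is central. Being central it acts on $V_\lambda$ by a scalar, which I read off from (i) and (ii) by evaluating on $v_\lambda^\low$: $X^2(v_\lambda^\low)=q^{(\lambda,\lambda)/2+(\lambda,\rho)}X(v_\lambda)=(-1)^{\langle 2\lambda,\rho^\vee\rangle}q^{(\lambda,\lambda)+2(\lambda,\rho)}v_\lambda^\low$, which is exactly the claimed scalar.
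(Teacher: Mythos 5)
The paper does not prove this lemma itself: it is quoted verbatim from \cite[Section 5.2]{Rcommutor}, so the only ``proof'' in the paper is that citation. Your reconstruction is correct and is essentially the argument of the cited reference: (i) is definitional, (iv) follows from Lusztig's formulas for $C_{T_{w_0}}$ on the generators corrected by the diagonal conjugation $C_J$, the scalar in (ii) comes from the rank-one reduction along a reduced word for $w_0$ (with the positive-root enumeration assembling the exponents into $q^{2(\lambda,\rho)}$ and the signs into $(-1)^{\langle 2\lambda,\rho^\vee\rangle}$), and (iii) is formal from the others; the two places you leave as ``delicate points'' are exactly the computations carried out in \cite{Rcommutor} and in Lusztig, and they do close as you claim.
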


The following is key:

\begin{Theorem}[{\cite[Theorem 3]{KR:1990}, \cite[Theorem 1]{LS}} \label{sR} (see \cite{Rcommutor} for this exact statement)]
$\sigma^X$ is the standard braiding. Equivalently, the standard $R$-matrix for $U_q(\g)$ can be realized as
\begin{equation}
R=  (X^{-1} \otimes X^{-1}) \Delta(X).
\end{equation}
\end{Theorem}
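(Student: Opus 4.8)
The plan is to show that the two elements $R=(X^{-1}\otimes X^{-1})\Delta(X)$ and the standard $R$-matrix $R_{\mathrm{std}}$ coincide in $\widetilde{U_q(\g)^{\otimes 2}}$. By Theorem~\ref{directproduct} an element of the completion is determined by its action on each $V_\lambda\otimes V_\mu$, so it suffices to prove $\sigma^X_{V_\lambda,V_\mu}=\Flip\circ R_{\mathrm{std}}$ for every ordered pair $(\lambda,\mu)$. I would split this into two parts: first that each $\sigma^X_{V_\lambda,V_\mu}$ is a morphism of $U_q(\g)$-modules, and second that it agrees with the standard braiding.

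For the first part I would invoke Proposition~\ref{coanti}. By Lemma~\ref{Xprops} the conjugation $C_X$ is already an algebra automorphism of $U_q(\g)$ (it sends generators to generators), so it remains to check that $C_X$ is a coalgebra anti-automorphism, i.e. $\Delta\circ C_X=(C_X\otimes C_X)\circ\Delta^{\mathrm{op}}$. This is a direct verification on generators using the formulas for $C_X$ in Lemma~\ref{Xprops} and the coproduct~(\ref{coproduct}); for instance on $E_i$ both sides equal $-F_{\theta(i)}\otimes 1-K_{\theta(i)}^{-1}\otimes F_{\theta(i)}$, and the checks for $F_i$ and $K_i$ are analogous. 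Proposition~\ref{coanti} then yields that every $\sigma^X_{V,W}$ is an isomorphism of representations.

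For the second part I would pin down $\sigma^X$ on highest weight vectors. Since $X\in\widetilde{U_q(\g)}$ commutes with all intertwiners, its action on the summand $V_{\lambda+\mu}\subseteq V_\lambda\otimes V_\mu$ is governed by Lemma~\ref{Xprops}. The vector $v_\lambda\otimes v_\mu$ is the highest weight vector of this summand, so $\Delta(X)(v_\lambda\otimes v_\mu)$ is a scalar multiple of $v_\lambda^\low\otimes v_\mu^\low$ (the weight space of weight $w_0(\lambda+\mu)$ being one-dimensional and spanned by this tensor), with scalar read off from Lemma~\ref{Xprops}(\ref{lhigh}). Applying $X^{-1}\otimes X^{-1}$ and using Lemma~\ref{Xprops}(\ref{llow})--(\ref{lhigh}) on the extremal vectors, the signs all cancel and the $q$-powers collapse via $c_{\lambda+\mu}-c_\lambda-c_\mu=(\lambda,\mu)$, where $c_\nu=(\nu,\nu)/2+(\nu,\rho)$, giving $\sigma^X(v_\lambda\otimes v_\mu)=q^{(\lambda,\mu)}\,v_\mu\otimes v_\lambda$. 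This matches the standard braiding exactly, whose quasi-$R$-matrix part acts trivially on a tensor of highest weight vectors.

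The main obstacle is upgrading this highest weight agreement to equality on all of $V_\lambda\otimes V_\mu$, since a priori the two module maps could differ on the lower isotypic summands and, worse, on multiplicity spaces where $\mathrm{mult}(V_\nu)>1$ (there the Hom space is not just a product of scalars, so matching one eigenvalue per $\nu$ does not suffice). I see two ways to close the gap: (a) a rigidity argument reducing via the hexagon axioms to the fundamental representations, and there back to the highest weight computation above; or (b) the route of Kirillov--Reshetikhin and Levendorskii--Soibelman, namely computing $(X^{-1}\otimes X^{-1})\Delta(X)$ outright from the coproduct formula for the quantum Weyl group element $T_{w_0}$ and recognizing Lusztig's quasi-$R$-matrix together with the Cartan factor supplied by $J$. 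I expect (b) to be the cleanest rigorous completion; it trades the rigidity obstacle for explicit bookkeeping, and along the way one must also confirm the normalization $v_{\lambda+\mu}^\low=v_\lambda^\low\otimes v_\mu^\low$ implicitly used in the highest weight computation above.
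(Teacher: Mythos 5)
The paper does not prove this statement at all: it is quoted from Kirillov--Reshetikhin and Levendorskii--Soibelman (with \cite{Rcommutor} for the exact formulation), and the authors explicitly say they take it as the \emph{definition} of the universal $R$-matrix. So there is no in-paper argument to compare against; what matters is whether your sketch would stand on its own.

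It would not, and you have correctly located the reason yourself: agreement on the Cartan component $V_{\lambda+\mu}\subseteq V_\lambda\otimes V_\mu$ determines a module map only up to its restriction to the other isotypic pieces, and on a multiplicity space $\Hom(V_\nu, V_\lambda\otimes V_\mu)$ of dimension $>1$ even knowing "the eigenvalue" is meaningless without knowing the map is scalar there. Neither of your proposed repairs closes this. Route (a) is circular as stated: the hexagon axioms for $\sigma^X$ are essentially equivalent to the quasi-triangularity of $(X^{-1}\otimes X^{-1})\Delta(X)$, which is (most of) what is being proved; moreover, the general formalism behind Proposition \ref{coanti} only guarantees that an element of the form $(Y^{-1}\otimes Y^{-1})\Delta(Y)$ yields a \emph{commutor} (cactus-type axioms), not a braiding, so one cannot reduce to fundamental representations this way without further input. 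Route (b) is the actual proof in the cited sources --- one computes $\Delta(T_{w_0})$ via Lusztig's multiplicative formula and identifies the quasi-$R$-matrix $\Theta$ together with the Cartan factor coming from $J$ --- but you have only named it, not carried it out, and that computation is the entire content of the theorem. The first two steps of your sketch (the check that $C_X$ is a coalgebra anti-automorphism, and the extremal-vector computation giving the scalar $q^{(\lambda,\mu)}$ with the signs $(-1)^{\langle 2\lambda,\rho^\vee\rangle}(-1)^{\langle 2\mu,\rho^\vee\rangle}$ cancelling against $(-1)^{\langle 2(\lambda+\mu),\rho^\vee\rangle}$) are fine and do correctly normalize the answer, but as written the proposal establishes only that $\sigma^X$ is \emph{some} isomorphism of modules with the right top-component scalar, not that it is the braiding.
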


In fact, we will take Theorem \ref{sR} as the definition of the universal $R$-matrix.

\subsection{Ribbon Hopf algebras}
\begin{Definition} (see \cite[Definition 4.2.8]{CP}) \label{ribbon_def}
A ribbon Hopf algebra $(\Hopf, R, v)$ is a quasi-triangular Hopf algebra $(\Hopf, R)$ equipped with an invertible central element $v$ such that 
\begin{enumerate}
\item \label{up} $v^2=uS(u)$, where $u= \mu(S \otimes id) R_{21}$
\item $S(v)=v$
\item $\varepsilon(v)=1$
\item \label{deltp} $\Delta(v)=(v \otimes v)(R_{21}R_{12})^{-1}.$
\end{enumerate}
\end{Definition}

\begin{Proposition} \label{makeg} (see \cite{CP})
The element $g:= v^{-1}u$ is grouplike, where $u$ is as in Definition \ref{ribbon_def} part (\ref{up}).
\qed
\end{Proposition}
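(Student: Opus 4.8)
The plan is to verify directly that $g$ satisfies the two defining conditions of a grouplike element, $\Delta(g) = g \otimes g$ and $\varepsilon(g) = 1$; invertibility of $g$ is immediate since both $v$ and $u$ are invertible. The key input beyond the ribbon axioms of Definition \ref{ribbon_def} is the standard coproduct formula for the Drinfeld element, namely $\Delta(u) = (R_{21}R_{12})^{-1}(u \otimes u)$, together with $\varepsilon(u) = 1$.

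First I would recall (or establish) the properties of $u = \mu(S\otimes\id)R_{21}$ that are needed: that $u$ is invertible, that $\varepsilon(u)=1$, and that $\Delta(u) = (R_{21}R_{12})^{-1}(u\otimes u)$. These follow from the quasi-triangularity axioms for $(\Hopf,R)$ — in particular the identities $(\Delta\otimes\id)R = R_{13}R_{23}$ and $(\id\otimes\Delta)R = R_{13}R_{12}$, together with the characterization of $u$ as the element implementing $S^2$ by conjugation — and are precisely the facts about the Drinfeld element cited from \cite{CP}.

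The verification is then a short computation. Since $v$ is central (hence so is $v^{-1}$) and $\Delta$ is an algebra homomorphism, we have $\Delta(g) = \Delta(v)^{-1}\Delta(u)$. Inverting the ribbon axiom Definition \ref{ribbon_def}(\ref{deltp}) gives $\Delta(v)^{-1} = (R_{21}R_{12})(v^{-1}\otimes v^{-1})$, and because $v^{-1}\otimes v^{-1}$ is central in $\Hopf\otimes\Hopf$ this equals $(v^{-1}\otimes v^{-1})(R_{21}R_{12})$. Substituting the formula for $\Delta(u)$, the factors $R_{21}R_{12}$ and $(R_{21}R_{12})^{-1}$ cancel, leaving $\Delta(g) = (v^{-1}\otimes v^{-1})(u\otimes u) = (v^{-1}u)\otimes(v^{-1}u) = g\otimes g$. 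For the counit, $\varepsilon(g) = \varepsilon(v^{-1})\varepsilon(u) = 1$, using $\varepsilon(v)=1$ and $\varepsilon(u)=1$.

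The main obstacle is not the cancellation above but the derivation of $\Delta(u) = (R_{21}R_{12})^{-1}(u\otimes u)$, which is the genuinely nontrivial Drinfeld-element identity; once that and $\varepsilon(u)=1$ are in hand, grouplikeness of $g$ drops out immediately by matching them against the ribbon axiom for $\Delta(v)$. This structural matching — the same factor $(R_{21}R_{12})^{-1}$ appearing in both coproducts — is exactly why the Proposition can be attributed to \cite{CP} with only a one-line argument.
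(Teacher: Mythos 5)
Your proof is correct, and it is essentially the argument the paper is pointing to: the Proposition is stated with a \qed and a bare citation to \cite{CP}, and the standard verification there is exactly your cancellation of $R_{21}R_{12}$ between the Drinfeld-element identity $\Delta(u)=(R_{21}R_{12})^{-1}(u\otimes u)$ and the ribbon axiom $\Delta(v)=(v\otimes v)(R_{21}R_{12})^{-1}$, plus $\varepsilon(u)=\varepsilon(v)=1$. You have simply supplied the details the paper delegates to the reference, correctly identifying the coproduct formula for $u$ as the one genuinely nontrivial input.
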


The following four maps are crucial to studying ribbon Hopf algebras:

\begin{Definition} (see \cite[p. 163]{CP}) 
Let $(\Hopf, R, v)$ be a ribbon Hopf algebra, and $(V, \pi)$ a representation of $\Hopf$.   Let $f \in V^*$ and $v \in V$. Let $e_i$ and $e^i$ be dual bases of $V$ and $V^*$ respectively. Then
\begin{enumerate}

\item \label{evthing} $\ev(f \otimes v)= f(v)$. 

\item \label{qtrbit} $\qtr (v\otimes f) =f(gv).$

\item \label{coevthing} $\coev(1)= \sum e_i \otimes e^i$. 

\item \label{coqtrbit} $\coqtr(1) = \sum  e^i \otimes g^{-1} e_i.$
\end{enumerate} 
\end{Definition}

Recall that $V \otimes V^*$ can be identified with $\End(V)$.  Under this identification $\qtr$ is the quantum trace of \cite{RT2}.

We will be working with $U_q(\g)$, which is not a ribbon Hopf algebra according to the above definition, since the elements $v$ and $R$ actually lie in the completions $\widetilde{U_q(\g)}$ and $\widetilde{U_q(\g) \otimes U_q(\g)}$ discussed in Section \ref{completion}. This is known as a topological ribbon Hopf algebra. The theory goes through just as well in the topological case. 

\begin{Definition} \label{defC}
Let $C$ be the element of $\widetilde{U_q(\g)}$ which acts on the representation $V_\lambda$ as multiplication by $q^{-(\lambda, \lambda)-2(\lambda, \rho)}$
\end{Definition}

The following is well known (see for example \cite[Corollary 8.3.16]{CP}).

\begin{Theorem}  \label{normal_ribbon}
$(U_q(\g), R,C)$ is a topological ribbon Hopf algebra. \qed
\end{Theorem}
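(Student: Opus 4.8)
The plan is to verify directly the four conditions of Definition \ref{ribbon_def} for the triple $(U_q(\g), R, C)$, taking the ribbon element to be $v = C$. Since $R$ and $C$ lie in the completions of Section \ref{completion}, each condition is an identity in $\widetilde{U_q(\g)}$ or $\widetilde{U_q(\g) \otimes U_q(\g)}$, and by Theorem \ref{directproduct} it suffices to check it on each $V_\lambda$ (respectively on each $V_\lambda \otimes V_\mu$, decomposed into its irreducible summands $V_\nu$). First, $C$ is central and invertible, being a nonzero scalar on each $V_\lambda$. Conditions (2) and (3) are then immediate: $C$ acts on the trivial module by $q^0 = 1$, so $\varepsilon(C)=1$; and since $V_\lambda^* \cong V_{-w_0\lambda}$ while $w_0$ preserves $(\cdot,\cdot)$ and satisfies $w_0\rho = -\rho$, the scalar $q^{-(\lambda,\lambda)-2(\lambda,\rho)}$ is invariant under $\lambda \mapsto -w_0\lambda$. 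Hence $C$ acts by the same scalar on $V_\lambda$ and on $V_\lambda^*$, and as the latter action is through $S$, this forces $S(C)=C$.

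The heart of the argument is condition (4), and here the $X$-formula makes everything transparent. By Proposition \ref{coanti} together with Theorem \ref{sR}, $C_X$ is a coalgebra anti-automorphism, which written out says $\Delta^{op}(X) = (X\otimes X)^{-1}\Delta(X)(X\otimes X)$. Using $R = (X^{-1}\otimes X^{-1})\Delta(X)$ and $R_{21} = (X^{-1}\otimes X^{-1})\Delta^{op}(X)$, I would substitute this relation to collapse the double braiding into a clean central expression:
\begin{equation*}
R_{21}R_{12} = (X\otimes X)^{-2}\Delta(X)^2 = (X^2\otimes X^2)^{-1}\Delta(X^2).
\end{equation*}
It then remains to relate $X^{-2}$ to $C$. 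By Lemma \ref{Xprops}(\ref{X2prop}) and Definition \ref{defC}, the element $\kappa := C X^2$ acts on $V_\lambda$ by the sign $(-1)^{\langle 2\lambda, \rho^\vee\rangle}$; since $\langle \alpha_i, \rho^\vee\rangle = 1$, this sign is multiplicative on tensor products, so $\kappa$ is a central grouplike element with $\kappa^2 = 1$. Writing $C = \kappa X^{-2}$ and using $\Delta(\kappa) = \kappa \otimes \kappa$ together with the centrality of $X^2$, one computes $\Delta(C) = (\kappa \otimes \kappa)\Delta(X^2)^{-1} = (C\otimes C)(R_{21}R_{12})^{-1}$, which is exactly condition (4). (This is also where the paper's point is visible: the sign $\kappa$ is precisely the discrepancy between the standard ribbon element $C$ and $X^{-2}$.)

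For condition (1), $v^2 = uS(u)$ with $u = \mu(S\otimes \id)R_{21}$, I would fall back on the standard analysis of the Drinfeld element. From the explicit coproduct and antipode one checks $S^2 = \Ad(K_{2\rho})$, where $K_{2\rho}$ is the grouplike implementing $S^2$; consequently $uK_{2\rho}^{-1}$ is central and acts as a scalar on each $V_\lambda$. Evaluating $u$ on a highest weight vector via the explicit $R$-matrix identifies this scalar as $q^{-(\lambda,\lambda)-2(\lambda,\rho)}$, i.e.\ $u = C K_{2\rho}$; then, using $S(K_{2\rho}) = K_{2\rho}^{-1}$ and $S(C)=C$, we get $uS(u) = C K_{2\rho} K_{2\rho}^{-1} C = C^2 = v^2$. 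This last computation of the central part of $u$ is the \emph{main obstacle}: it is the one place where one must work with the nilpotent part of the $R$-matrix and track signs and the precise grouplike $K_{2\rho}$ carefully. Alternatively, this identification is exactly \cite[Corollary 8.3.16]{CP} and may simply be cited, the genuinely new content being the $X$-based derivation of condition (4).
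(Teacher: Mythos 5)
Your proposal is correct, but it is worth noting that the paper does not actually prove this theorem: it states it with a \qed and simply points to \cite[Corollary 8.3.16]{CP}, so any honest verification is ``a different route.'' Your reduction to scalars on each $V_\lambda$ via Theorem \ref{directproduct}, and your checks of $\varepsilon(C)=1$ and $S(C)=C$ (via $V_\lambda^*\cong V_{-w_0\lambda}$ and $w_0\rho=-\rho$), are fine. Your derivation of axiom (4) from $R=(X^{-1}\otimes X^{-1})\Delta(X)$ is correct and is essentially the same algebra the paper performs later (the computation of $R_{21}$ in Proposition \ref{theelements} and the grouplike sign $(-1)^{\langle 2\lambda,\rho^\vee\rangle}$ in Theorem \ref{ishalf}), but run in the opposite logical direction: the paper takes Theorem \ref{normal_ribbon} as given and uses Lemma \ref{other_ribbons} to conclude that $X^{-2}$ is a ribbon element, whereas you derive axiom (4) for $C$ directly from the Kirillov--Reshetikhin/Levendorskii--Soibelman formula. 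This is not circular, since Theorem \ref{sR} is imported from \cite{KR:1990} and \cite{LS} independently of any ribbon structure, and it has the virtue of making the relation between $C$ and $X^{-2}$ (your $\kappa$) explicit at the outset. You are also right that axiom (1) is the genuine content: identifying the central part of the Drinfeld element $u$ as $C$ (i.e.\ $u=Cg$ with $g=K_{2\rho}$ implementing $S^2$) requires the explicit form of the $R$-matrix, and citing \cite[Corollary 8.3.16]{CP} for exactly this step is precisely what the paper does for the whole theorem. In short: your argument buys a mostly self-contained proof in which only the standard Drinfeld-element computation is outsourced, at the cost of front-loading computations the paper defers to Section \ref{fiveaa}.
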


\subsection{The Temperley-Lieb category} \label{TLdef}
The unoriented Temperley-Lieb category $TL$ has as objects collections of points on the real line up to isotopy, and a morphism from $A$ to $B$ is a formal linear combination of ``planar arc diagrams" each of which is a collection of nonintersecting segments in $\br \times [0,1]$ whose boundary consists of $A \times 0$ and $B \times 1$ modulo isotopy in the plane and the following relation:

\begin{equation}
\setlength{\unitlength}{0.5cm}
\thicklines
\begin{picture}(16,1.9)
\put(0,-1){
\begin{picture}(16,3.5)
\thicklines
\put(4,1.6){\circle{2.8}}
\put(7.5,1.5){$\longrightarrow$}
\put(10,1.5){$-q-q^{-1}.$}
\end{picture}}
\end{picture}
\end{equation}
\vspace{0.1cm}

As discussed in, for example, \cite{GW}, $TL$ is a rigid monoidal category, where composition is given by vertical stacking, tensor product is given by disjoint union, and the dual is given by 180-degree rotation.  Since $**$ (i.e. double dual) acts trivially this is a pivotal category with the trivial pivotal structure.

\section{Half-ribbon Hopf algebras}

This section contains the definition of a half-ribbon Hopf algebra. It also explains the relationship with the topological category $\HRIB(S)$.

\subsection{Definition and basic properties} \label{main1}
\begin{Definition}
A half-ribbon Hopf algebra is a Hopf algebra, along with an element $t \in \Hopf$, such that $(\Hopf, R, v)$ is a ribbon Hopf algebra, where $R= (t^{-1} \otimes t^{-1}) \Delta(t)$ and $v= t^{-2}$.

In the case where $t$ only exists in a completion of $\Hopf$, we say $\Hopf$ is a topological half-ribbon Hopf algebra.
 \end{Definition}
 
 \begin{Proposition} \label{hrprops}
 A half-ribbon element $t$ in a half-ribbon Hopf algebra has the following properties:
 \begin{enumerate}
 \item \label{aproperty} The algebra automorphism $C_t: \Hopf \rightarrow \Hopf$ defined by $x \rightarrow txt^{-1}$ is also a coalgebra anti-automorphism.
 
 \item \label{eproperty} $\varepsilon(t)=1.$ Equivalently, $t$ acts as the identity on the trivial representation.
 
 \item \label{sproperty} $S(t)^2= t^2.$
 \end{enumerate}
 \end{Proposition}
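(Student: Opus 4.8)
The plan is to derive all three statements directly from the quasi-triangular and ribbon axioms together with the two defining relations $R=(t^{-1}\otimes t^{-1})\Delta(t)$ and $v=t^{-2}$; no part requires more than a short manipulation. For part (\ref{aproperty}), note first that $C_t$ is automatically an algebra automorphism, being conjugation by the invertible element $t$, so only the coalgebra anti-automorphism property needs proof. I would rewrite the $R$-matrix relation as $\Delta(t)=(t\otimes t)R$, whence $\Delta(t)^{-1}=R^{-1}(t\otimes t)^{-1}$, and compute
\[
\Delta(C_t(x))=\Delta(t)\,\Delta(x)\,\Delta(t)^{-1}=(t\otimes t)\,R\,\Delta(x)\,R^{-1}\,(t\otimes t)^{-1}.
\]
By the quasi-triangularity identity $R\,\Delta(x)\,R^{-1}=\Delta^{\mathrm{op}}(x)$, where $\Delta^{\mathrm{op}}=\Flip\circ\Delta$, this equals $(t\otimes t)\,\Delta^{\mathrm{op}}(x)\,(t\otimes t)^{-1}=(C_t\otimes C_t)(\Delta^{\mathrm{op}}(x))$, which is exactly the assertion that $C_t$ is a coalgebra anti-homomorphism; counit preservation $\varepsilon\circ C_t=\varepsilon$ is immediate since $\varepsilon(t)$ and $\varepsilon(t)^{-1}$ cancel. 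This is the same mechanism underlying Proposition \ref{coanti}, now run purely Hopf-algebraically rather than for $U_q(\g)$ specifically.

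For part (\ref{eproperty}), I would apply the algebra homomorphism $\varepsilon\otimes\id\colon\Hopf\otimes\Hopf\to\Hopf$ to the relation $R=(t^{-1}\otimes t^{-1})\Delta(t)$. Using the counit axiom $(\varepsilon\otimes\id)\Delta(t)=t$, the right-hand side collapses to $\varepsilon(t^{-1})\,t^{-1}\,t=\varepsilon(t)^{-1}$, while the left-hand side is $1$ by the quasi-triangularity axiom $(\varepsilon\otimes\id)(R)=1$; hence $\varepsilon(t)=1$, and the equivalent statement about the trivial representation is just the definition of $\varepsilon$. For part (\ref{sproperty}), I would invoke the ribbon axiom $S(v)=v$ with $v=t^{-2}$: since $S$ is an algebra anti-homomorphism, $S(t^{-2})=S(t^{-1})^2=S(t)^{-2}$, so $S(t)^{-2}=t^{-2}$, and inverting both sides gives $S(t)^2=t^2$.

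Each part is essentially a one-line calculation, so I expect no genuine obstacle. The only place demanding care is part (\ref{aproperty}), where one must fix conventions so that the quasi-triangularity identity ($\Delta^{\mathrm{op}}=R\Delta R^{-1}$) and the characterization of a coalgebra anti-homomorphism ($\Delta\circ\phi=(\phi\otimes\phi)\circ\Delta^{\mathrm{op}}$) line up, ensuring that conjugation by $t\otimes t$ genuinely converts $\Delta$ into $\Delta^{\mathrm{op}}$.
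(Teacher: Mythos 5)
Your proposal is correct and follows essentially the same route as the paper: part (i) from the quasi-triangularity relation $R\,\Delta(x)\,R^{-1}=\Delta^{\mathrm{op}}(x)$ combined with $\Delta(t)=(t\otimes t)R$, part (ii) from the axiom $(\varepsilon\otimes\id)(R)=1$ (which is exactly the paper's statement that $R$ acts as the identity on $\mathbf{1}\otimes V$, here run purely algebraically), and part (iii) from $S(v)=v$ with $v=t^{-2}$. The paper's proof is just a terser version of the same three observations.
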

 
 \begin{proof}
 (\ref{aproperty}) follows from $R \Delta(x)= \Delta^{op}(x)R$.
 
 (\ref{eproperty}) follows from the fact that $R$ acts as the identity on ${\bf 1} \otimes V$, where ${\bf 1}$ is the trivial representation. 
 
 (\ref{sproperty}) Follows because $t^{-2}$ is a ribbon element so $S(t^{-2})=t^{-2}$.
\end{proof}

\begin{Proposition} \label{theelements} The various important elements in a ribbon Hopf algebra can be written in terms of a half-ribbon elements $t$ as follows: 
\begin{equation}
\begin{array}{c|c|c|c}
R & v & u & g \\ \hline
(t^{-1} \otimes t^{-1}) \Delta(t)= \Delta^{op}(t) (t^{-1} \otimes t^{-1}) & t^{-2} & S(t^{-1})t^{-1} & S(t)t^{-1} = S(t^{-1}) t\\
\end{array}
\end{equation}
\end{Proposition}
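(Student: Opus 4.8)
The plan is to handle the four columns in turn, observing that two of the entries, namely $R = (t^{-1}\otimes t^{-1})\Delta(t)$ and $v = t^{-2}$, hold by the very definition of a half-ribbon Hopf algebra. The genuine content is therefore the alternative expression for $R$, the formula for $u$, and the two expressions for $g$; each of these will follow from one of the three properties already established in Proposition \ref{hrprops}.

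First I would establish the ``op'' form of $R$. Since $C_t(x) = txt^{-1}$ is a coalgebra anti-automorphism by Proposition \ref{hrprops}(\ref{aproperty}), we have $\Delta(txt^{-1}) = (t\otimes t)\Delta^{op}(x)(t^{-1}\otimes t^{-1})$ for all $x$; taking $x = t$ gives $\Delta(t) = (t\otimes t)\Delta^{op}(t)(t^{-1}\otimes t^{-1})$. Multiplying on the left by $(t^{-1}\otimes t^{-1})$ then yields $(t^{-1}\otimes t^{-1})\Delta(t) = \Delta^{op}(t)(t^{-1}\otimes t^{-1})$, which is exactly the identity $R = \Delta^{op}(t)(t^{-1}\otimes t^{-1})$.

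Next I would compute $u = \mu(S\otimes \id)(R_{21})$ using this op-form of $R$, the point being that it places the antipode-related factors adjacent so that the antipode axiom applies cleanly. Writing $\Delta(t) = \sum t_{(1)}\otimes t_{(2)}$, we get $R = \sum t_{(2)}t^{-1}\otimes t_{(1)}t^{-1}$, hence $R_{21} = \sum t_{(1)}t^{-1}\otimes t_{(2)}t^{-1}$ and $u = \sum S(t^{-1})S(t_{(1)})t_{(2)}t^{-1}$. The antipode axiom gives $\sum S(t_{(1)})t_{(2)} = \varepsilon(t)\cdot 1$, and $\varepsilon(t)=1$ by Proposition \ref{hrprops}(\ref{eproperty}), so $u = S(t^{-1})t^{-1}$.

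Finally, from $g := v^{-1}u$ (Proposition \ref{makeg}) and $v = t^{-2}$ we obtain $g = t^2 S(t^{-1})t^{-1}$. Using that $S$ is an algebra anti-homomorphism, so $S(t^{-1}) = S(t)^{-1}$, together with the relation $S(t)^2 = t^2$ from Proposition \ref{hrprops}(\ref{sproperty}), we get $t^2 S(t)^{-1} = S(t)^2 S(t)^{-1} = S(t)$, whence $g = S(t)t^{-1}$; the same relation $S(t)^2 = t^2$ rewrites this as $S(t^{-1})t$, giving both claimed forms. I expect the main obstacle to be purely bookkeeping: being careful with the Sweedler-notation manipulation for $u$ (in particular choosing the op-form of $R$ so that the antipode axiom collapses the middle factors) and then correctly deploying $S(t)^2 = t^2$ to reconcile the two expressions for $g$. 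No input beyond Proposition \ref{hrprops} and the standard antipode identities is required.
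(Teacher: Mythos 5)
Your proposal is correct and follows essentially the same route as the paper: the first entries for $R$ and $v$ are definitional, the op-form of $R$ comes from $C_t$ being a coalgebra anti-automorphism (Proposition \ref{hrprops}(\ref{aproperty})), $u$ is computed from $R_{21}$ via the antipode axiom together with $\varepsilon(t)=1$, and the two forms of $g=v^{-1}u$ follow from $S(t)^2=t^2$. The only difference is that you spell out the Sweedler manipulation and the reconciliation of the two expressions for $g$ slightly more explicitly than the paper does.
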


\begin{proof}

(R): The first formula is part of the definition of a half-ribbon element. The second formula follows by Proposition \ref{hrprops} part (\ref{aproperty}) since this implies that, for all $x \in \Hopf$, 
$$(t^{-1} \otimes t^{-1}) \Delta(x) (t \otimes t) = \Delta^{op}(t^{-1}xt).$$ 

(v): This is part of the definition of a half-ribbon element.

(u): By definition, $u = \mu \circ (S \otimes 1) R_{21}$, where $\mu$ is multiplication. Thus:
\begin{align}
u &= \mu \circ (S \otimes 1) ([(\Delta^{op}(t)(t^{-1} \otimes t^{-1})]_{21}) \\
&= \mu [((S \otimes 1)  (\Delta(t))(t^{-1} \otimes t^{-1})] \\
&\label{hj0}= \sum S(t^{-1}) S(t_1) t_2 t^{-1}\\
&\label{hj1} =  S(t^{-1})[\iota \circ \varepsilon (t)] t^{-1} \\
&\label{hj2} =  S(t^{-1})t^{-1}.
\end{align}
In Equation (\ref{hj0}) we have used Sweedler's notation $\Delta(x) = \sum x_1 \otimes x_2$.  Equation (\ref{hj1}) follows from the antipode axiom of a Hopf algebra, and (\ref{hj2}) follows by Proposition \ref{hrprops} part (\ref{eproperty}). 

(g) By definition, $g= v^{-1} u$. Recall that $v$ is central. To get the second equation, apply Lemma \ref{hrprops} part (\ref{sproperty}). 
\end{proof}

\begin{Comment}
For historical reasons the ribbon element $v$ represents a negative twist.  We use $t$ for the positive half-twist, which explains the fact that $v=t^{-2}$.
\end{Comment}

\subsection{A topological category of ribbons with half-twists}
There is a functor from a certain topological category to the category of representations of any ribbon Hopf algebra, which allows one to construct topological invariants. For a half-ribbon Hopf algebra, this functor can be extended to a larger topological category. In this section we define the two relevant categories, beginning with the large one.

\begin{Definition}
 $\THRIB(S)$ (topological half-ribbons with labels in some set $S$) is the category defined by:
 
$\bullet$ Objects in $\THRIB(S)$  consist of a finite number of disjoint closed intervals on the real line, each labeled with an element of $S$, each with a choice of shading (shaded or unshaded), and each directed (up or down). These objects are considered up to isotopy of the real line. For example:
\vspace{-0.4cm}
\begin{center}
$$\suv[A] \uuv[B] \sdv[B] \sdv[C] \udv[A] \suv[D].$$
\end{center}

$\bullet$ A morphism between two objects $A, B \in \THRIB(S)$ consist of a `tangle of orientable, directed ribbons'  in $\br^2 \times I$, whose `loose ends" are exactly $(A, 0, 0)  \cup (B,0, 1) \subset \br \times \br \times I $, along with a choice of direction and shading of each ribbon. These must be such that the direction (up or down) of each interval in $A \cup B$ agrees with the direction of the ribbon whose end lies at that interval, and the shading (light or dark) of each interval agrees with the shading on the visible side of the ribbon near that interval.  
These are considered up to isotopy.

$\bullet$ Composition of two morphisms is given by stacking them on top of each other, and them shrinking the vertical axis by a factor of two. Note that we read our diagrams bottom to top. For example,

\setlength{\unitlength}{0.7cm}
\begin{center}
\begin{picture}(12,3.6)

\put(0,0){\Ltst}
\put(0,-0.7){\psuv[V]}
\put(2,-0.7){\psuv[U]}
\put(0,2.7){\puuv[U]}
\put(2,2.7){\puuv[V]}

\put(3.5,1.5){$\circ$}

\put(4,2.7){\psuv[V]}
\put(6,2.7){\psuv[U]}
\put(4,0){\Dtst}
\put(4,-0.7){\puuv[U]}
\put(6,-0.7){\puuv[V]}

\put(7.5, 1.5){$=$}

\put(10,0){\ShrunkDtst}
\put(10,1.5){\ShrunkLtst}
\put(10,-0.7){\puuv[U]}
\put(12,-0.7){\puuv[V]}
\put(10,2.7){\puuv[U]}
\put(12,2.7){\puuv[V]}

\put(13.5,0){$.$}

\end{picture}
\end{center}

\end{Definition}

\begin{Definition}
$\DR(S)$ is the full subcategory of $\THRIB(S)$ consisting of those objects all of whose intervals are unshaded.
\end{Definition}

The definition of $\THRIB(S)$ can easily be made rigorous, in the same way as is done for the subcategory  $\DR(S)$ (see for example \cite{CP}). If fact, as with $\DR(S)$, $\THRIB(S)$ is  a rigid monoidal category, where tensor products and duals are are shown below:
\begin{equation}
\left(
\suv[A] \uuv[B]
\right) 
\otimes 
\left(\udv[A] \uuv[C] \suv[C]
 \right) 
 =
\suv[A] \uuv[B] \udv[A] \uuv[C] \suv[C],
\end{equation}
\begin{equation}
\left(\suv[A] \udv[B] \sdv[C] \suv[B] \right)^* = \sdv[B] \suv[C] \uuv[B] \sdv[A].
\end{equation}

\subsection{A functor from $\THRIB(S)$ to $\Hopf$-rep}

We now show that the category of representations of a half-ribbon Hopf algebra admits a natural functor from $\THRIB(\Hopf\text{-rep})$. This theorem is an extension of the corresponding result relating ribbon Hopf algebras and $\DR(\Hopf\text{-rep})$, so we begin by stating this known result. 

\begin{Theorem} \label{ribbonfunctor} (see \cite{R}, \cite{Shum}, or \cite[Theorem 5.3.2]{CP}) Let $(\Hopf, R,C)$ be a ribbon Hopf algebra. 
There is a unique monoidal functor $\cF'$ from $\DR(\Hopf \text{-rep})$ to $\Hopf \text{-rep}$ such that
\begin{enumerate}

\item $\cF'(\uuv[V])=V$ and $\cF'(\udv[V])=V^*$,

\item $\begin{array}{ll} \cF' \left( \uducap[V]  \right)= \ev, & \cF' \left( \uudcap[V] \right)= \qtr, \\ \\
\cF' \left( \uudcup[V] \right)= \coev, & \cF' \left( \uducup[V] \right)= \coqtr,
\end{array}$
\vspace{0.3cm}

\item $\cF' \left( \begin{picture}(2,2.2)
\put(0,0.9){\twist}
\put(0,-1.1){\usltwist}
\end{picture} \right) = C,$ 

\noindent thought of as a morphism from $V$ to $V$ or from $V^*$ to $V^*$, depending on the orientation.

\vspace{0.05in}

\item $\cF'\left( \plaincrossing \right) = \Flip \circ R$

\noindent as a morphism from the tensor product of the bottom two objects to the tensor product of the top two objects, regardless of labeling and orientation.

\end{enumerate}
\end{Theorem}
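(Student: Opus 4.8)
The plan is to invoke the standard presentation of the category of framed oriented tangles by generators and relations, and then to check that the assignments (1)--(4) respect all of those relations. Recall (from the work of Reshetikhin--Turaev, Shum, and Freyd--Yetter) that $\DR(S)$ is generated, as a monoidal category, by the elementary tangles --- the cups, caps, positive and negative crossings, and positive and negative full twists --- subject to a finite list of local moves: the rigidity (zig-zag) identities, the Reidemeister-type moves for framed oriented tangles (the framed version of Reidemeister I, together with Reidemeister II and III), and the compatibility moves relating twists, crossings, and duality. Since every morphism of $\DR(S)$ is a composite of tensor products of these generators, defining $\cF'$ on objects and on the generators as in (1)--(4) and verifying the relations both pins $\cF'$ down uniquely and shows that it is a well-defined monoidal functor; monoidality is essentially automatic, since tensor product of tangles is side-by-side juxtaposition and the generators are assigned compatibly.

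First I would record the algebraic identity corresponding to each family of relations. The zig-zag identities for $\ev$ and $\coev$ are the usual snake identities exhibiting $V^*$ as a left dual of $V$; the corresponding identities for $\qtr$ and $\coqtr$ are the same computation twisted by the grouplike element $g = v^{-1}u$ of Proposition \ref{makeg}, and it is exactly grouplikeness of $g$ that makes these maps $\Hopf$-linear and makes the identities hold. The braiding relations then reduce to standard facts about the universal $R$-matrix: naturality of $\Flip \circ R$ is the intertwining property $R\,\Delta(x) = \Delta^{op}(x)\,R$, invertibility (Reidemeister II) is invertibility of $R$, and Reidemeister III is the quantum Yang--Baxter equation $R_{12}R_{13}R_{23} = R_{23}R_{13}R_{12}$, which holds in any quasi-triangular Hopf algebra.

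Next I would treat the moves involving the twist, where the ribbon axioms of Definition \ref{ribbon_def} are precisely what is needed. Centrality of $v$ gives naturality of the twist; the identity $\Delta(v) = (v \otimes v)(R_{21}R_{12})^{-1}$ is exactly the move relating a twist on a tensor product to twists on the two factors together with a double braiding, and hence underlies the framed Reidemeister I move; and $S(v) = v$ together with $\varepsilon(v) = 1$ governs how a twist interacts with caps and cups. Each such move becomes an identity in $\End(V)$ or $\End(V^*)$ that follows from these axioms after expanding the relevant generators, and each verification is local and finite.

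The main obstacle is bookkeeping rather than conceptual: one must fix a presentation, enumerate its generating moves, and match each to the corresponding Hopf-algebraic identity, being careful about orientations and shadings and about the distinction between left and right duals, so that $g$ and $g^{-1}$ appear with the correct variance in $\qtr$ and $\coqtr$. The genuinely delicate cases are the mixed moves that slide a crossing or a twist past a cap or cup, since these simultaneously involve $R$, $v$, the antipode $S$, and $g$; these are exactly the places where the ribbon axioms are used in full. Since all of this is carried out in detail in \cite{R}, \cite{Shum}, and \cite[Theorem 5.3.2]{CP}, I would present the checks for the duality and twist moves explicitly and refer to those sources for the remaining routine verifications.
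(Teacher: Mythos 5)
The paper offers no proof of this theorem; it is stated as a known result from \cite{R}, \cite{Shum}, and \cite[Theorem 5.3.2]{CP}, and your generators-and-relations outline is exactly the argument carried out in those sources, with the topological moves correctly matched to the corresponding Hopf-algebraic identities (snake identities, Yang--Baxter, and the ribbon axioms). One small correction: the $\Hopf$-linearity of $\qtr$ and $\coqtr$ rests on the conjugation identity $gxg^{-1}=S^2(x)$ satisfied by $g=v^{-1}u$, not on grouplikeness of $g$ as such (grouplikeness is what makes the resulting duality compatible with tensor products), but this does not affect the soundness of the outline.
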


\begin{Comment}
Warning, this functor is not a strict rigid functor.  Notice that $\cF'(\udv[V]{}^*)=\cF'(\uuv[V]) = V$, while $\cF'(\udv[V])^*=(V^*)^* = V^{**}$.
\end{Comment}

Our main result for this section is that $\cF'$ can be extended to a functor $\cF: \THRIB(\Hopf \text{-rep}) \rightarrow \Hopf \text{-rep}$ as follows.

\begin{Definition}
Let $V$ be a representation of a half-ribbon Hopf algebra $\Hopf$ and for each $x \in \Hopf$ let $\pi_V(x)$ be the element of $\End(V)$ defined by $x$. Define a new representation $V^{C_t}$ which is equal to $V$ as a vector space, but with the action of $x \in \Hopf$ defined by $\pi_{V^{C_t}}(x) = \pi_V(txt^{-1})$. \end{Definition}

\begin{Definition}
The `topological half-twist' is the morphism $I_n$ in $\HRIB$ which takes $n$ ribbons of any shading and orientation, and twists them all together by 180 degrees, as shown below:

\begin{center}
\setlength{\unitlength}{0.3cm}
\begin{picture}(10,5.2)
\put(0,0){\ttopns}
\put(3.9,4.8){$. . .$}
\put(3.9,0){$. . .$}
\end{picture}
\setlength{\unitlength}{0.3cm}
\end{center}
\end{Definition}

\begin{Theorem} \label{nice-hfunctor}
There is a unique monoidal functor $\cF$ from $\mathcal{HRIB}(\Hopf \text{-rep})$ to $\Hopf \text{-rep}$ such that
\begin{enumerate}

\item $\cF(\uuv[V])=V$, $\cF(\udv[V])=V^*$, $\cF(\suv[V])= V^{C_t}$ and  $\cF(\sdv[V])= (V^*)^{C_t}.$

\vspace{0.1cm}

\item On unshaded caps and cups $\cF$ agrees with $\cF'$.

\item \label{thtp} The topological half-twist on $n$ ribbons is sent to $\rev \circ \Delta^n(t)$ acting on $\cF$ of the bottom object, regardless of shading and orientation.
\end{enumerate}
Furthermore, the restriction of $\cF$ to $\DR(\Hopf \text{-rep})$ agrees with $\cF'$. 
\end{Theorem}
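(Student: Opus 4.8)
The plan is to present $\HRIB(\Hopf\text{-rep})$ by generators and relations and to define $\cF$ on the generators, the generators being the (shaded and unshaded) caps and cups together with the half-twists $I_n$. By putting a tangle of half-twisted ribbons in generic position with respect to the height function and slicing horizontally, every morphism of $\HRIB(S)$ becomes a composite of tensor products of identity strands with one of these elementary pieces; in particular, by the isotopy (\ref{main_isotopy}) even the crossing is already built from half-twists, so no separate crossing generator is needed. Uniqueness is then immediate: properties (1)--(3) determine $\cF$ on caps, cups and half-twists, and since a shaded cap or cup is obtained from an unshaded one by half-twisting the incident strands, $\cF$ is forced on every generator, hence on every morphism. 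The content of the theorem is therefore existence: that the prescribed values are genuine $\Hopf$-module maps and that they respect the isotopy relations.

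First I would check that the prescribed morphisms are intertwiners. The essential case is the single half-twist $I_1$, sent to multiplication by $t$, viewed as a map $V \to V^{C_t}$ (or $V^{C_t}\to V$): the first is an intertwiner because $\pi_{V^{C_t}}(x)\,t = \pi_V(txt^{-1})\pi_V(t) = \pi_V(t)\pi_V(x) = t\,\pi_V(x)$, and the second because $t^2 = v^{-1}$ is central by Proposition \ref{theelements}. The general statement, that $\rev\circ\Delta^n(t)$ intertwines the $\Hopf$-actions on $\cF$ of the bottom and top objects, is the $n$-fold iterate of the identity $\Delta(txt^{-1}) = (t\otimes t)\,\Delta^{op}(x)\,(t^{-1}\otimes t^{-1})$, i.e. of the fact (Proposition \ref{hrprops}(\ref{aproperty})) that $C_t$ is a coalgebra anti-automorphism; a short computation confirms this already for $n=2$, where the order-reversal built into $\rev$ is exactly matched by the passage from $\Delta$ to $\Delta^{op}$ and the shading-toggle is matched by $C_t$.

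Next I would verify invariance under isotopy. I would organize the isotopies into a finite complete set of local moves --- a Reidemeister calculus for shaded, oriented, half-twisted ribbons in the spirit of \cite{R}, \cite{Shum} --- and check each. The zig-zag (rigidity) moves for unshaded caps and cups are inherited from $\cF'$ (Theorem \ref{ribbonfunctor}); the shaded ones reduce to these after absorbing half-twists, which is where well-definedness of the derived shaded caps and cups gets checked. The hard part will be the moves governing the $n$-strand half-twist itself: that $\rev\circ\Delta^n(t)$ agrees with every way of decomposing the $180^\circ$ twist of $n$ ribbons into single half-twists and crossings. Algebraically this is the statement that, after stripping the individual factors of $t$, the element $\Delta^n(t)$ is exactly the product of $R$-matrices realizing the reversal permutation, which I would prove by induction on $n$ from $\Delta(t) = (t\otimes t)R$ (the defining relation $R=(t^{-1}\otimes t^{-1})\Delta(t)$) together with coassociativity. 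This coherence --- the precise analogue for the half-twist of the ribbon-Hopf identity $\Delta(v)=(v\otimes v)(R_{21}R_{12})^{-1}$ --- is the crux of the whole argument.

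Finally I would check that $\cF$ restricts to $\cF'$ on $\DR(\Hopf\text{-rep})$. On unshaded caps and cups this holds by property (2). For the crossing, applying $\cF$ to the isotopy (\ref{main_isotopy}) --- which expresses the crossing as the two-strand half-twist $I_2$ corrected by single half-twists on the two strands --- and using $\cF(I_2)=\rev\circ\Delta(t)=\Flip\circ\Delta(t)$, one computes $\cF(\text{crossing}) = \Flip\circ(t^{-1}\otimes t^{-1})\Delta(t)=\Flip\circ R$, matching Theorem \ref{ribbonfunctor}(4). For the full twist, a positive full twist is $I_1\circ I_1$ and is sent to $t\cdot t = t^2 = v^{-1}$; once the convention of the Comment (that the ribbon element $v$ represents a negative twist) is taken into account, this matches $\cF'$'s assignment of the twist to the ribbon element. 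Since crossings, twists and unshaded caps and cups generate $\DR(\Hopf\text{-rep})$ and $\cF$ agrees with $\cF'$ on each, the two functors agree on the whole subcategory.
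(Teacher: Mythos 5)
Your algebraic computations are all correct and in fact coincide with what the paper does: the intertwiner check for the half-twist, the inductive identity expressing $\Delta^n(t)$ as $(t\otimes\cdots\otimes t)$ times the product of $R$-matrices realizing the reversal permutation (this is exactly the paper's Proposition \ref{bighalftwist}), and the derivation of the crossing and the shaded caps and cups from \eqref{main_isotopy} and half-twist absorption (the paper's Lemma \ref{shadedinfo}). The gap is in the topological foundation on which you hang all of this. You propose to present $\THRIB(\Hopf\text{-rep})$ by generators and relations, "organize the isotopies into a finite complete set of local moves --- a Reidemeister calculus for shaded, oriented, half-twisted ribbons --- and check each." No such complete calculus exists in the literature for this enlarged category: the cited results of Reshetikhin and Shum give a presentation of the ordinary ribbon tangle category $\DR$, not of a category whose generators include an $n$-strand $180^\circ$ twist for every $n$ and whose objects carry shadings. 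Producing and, above all, proving the completeness of such a move set is itself a substantial theorem, and your proposal asserts it rather than supplies it. This is precisely the step the paper is careful never to take.

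The paper's device for avoiding it is the "standard form" argument in the proof of Proposition \ref{hfunctor}: every morphism of $\THRIB$ is isotoped so that each shaded boundary interval at the bottom acquires a positive half-twist, each shaded interval at the top a negative one, and everything in between is an ordinary ribbon tangle; moreover two morphisms are isotopic if and only if their middle tangles are. Treating the boundary half-twists as Reshetikhin--Turaev coupons then reduces both existence and uniqueness to the already-known Theorem \ref{ribbonfunctor}, with no new coherence to verify. The multi-strand half-twist $I_n$ is not taken as a generator at all; its value $\rev\circ\Delta^n(t)$ is \emph{computed} afterwards (Proposition \ref{bighalftwist}) from the functor already known to exist, which is why the quasi-triangularity identity you correctly isolate as "the crux" appears in the paper as a consequence rather than as a relation to be checked. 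If you want to salvage your route, you must either prove the completeness of your move set or, more economically, insert the standard-form normalization and reduce to $\DR$ as the paper does.
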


The proof of Theorem \ref{nice-hfunctor} requires several lemmas, and will occupy the rest of this section. 

\begin{Comment} \label{lr_comment}
As was the case with $\cF'$, $\cF$ is not a strict rigid functor.  In particular, $\cF(\sdv[V])= (V^*)^{C_t}$, \vspace{0.15cm}which is not equal to  $(V^{C_t})^*$ (although these two representations are isomorphic). In fact, $(V^*)^{C_t}$ is equal to the right dual ${}^*(V^{C_t})$. To see why, notice that, as vector spaces, both $(V^*)^{C_t}$ and ${}^*(V^{C_t})$ are equal to $\Hom_F(V, F)$. Each comes with a chosen action of $\Hopf,$ and, using the fact that $C_t$ is a coalgebra antiautomorphism, one can show that these two actions are identical. 
\end{Comment}

\begin{Comment}
A good mnemonic for remembering what $\cF$ does to objects is to think of the ribbon as always being labeled with a representation on its light side, so that when you look at the dark side you see that label rotated 180-degrees around the y-axis.  For example, instead of $V_i^{C_t}$ you would see ${}_i V$.  This works for remembering how duals interact with half twists since when you rotate $V_i^*$ by 180-degrees about the $y$-axis you see ${}_i^*V$.
\end{Comment}

\begin{Comment}
There is more than one extension of $\cF'$ from $\Ribbon(\Hopf\text{-rep})$ to the larger category $\HRIB(\Hopf\text{-rep})$, but only one satisfies Theorem \ref{nice-hfunctor} Part \eqref{thtp}.
\end{Comment}

We first prove the following different characterization of $\cF$. This is simpler to prove, and is also more general than Theorem \ref{nice-hfunctor}. However, this result is less satisfying in other ways. For instance, Proposition \ref{hfunctor} does not imply that all half twists are sent to $t$ acting on $\cF$ of the bottom object. 

\begin{Proposition} \label{hfunctor}
 Let $(\Hopf, R, v)$ be a ribbon Hopf algebra, and let $t \in \Hopf$ be an invertible element.  There is a unique monoidal functor $\cF$ from $\THRIB(\Hopf \text{-rep})$ to $\Hopf \text{-rep}$ such that:
\begin{enumerate}

\item $\cF$ agrees with $\cF'$ on the full subcategory $\DR(\Hopf \text{-rep})$.

\item $\cF(\suv[V])= V^{C_t}$ and $\cF(\sdv[V]) = (V^*)^{C_t}$.
\vspace{0.3cm}

\item \label{hh} $\cF \left( \usrtwist \right) = t$ as a morphism from $V$ to $V^{C_t}$ or from  $V^*$ to $(V^*)^{C_t}$ depending on orientation.

\noindent $\cF \left( \sultwist \right) = t^{-1}$ as a morphism from $V^{C_t}$ to $V$ or from $(V^*)^{C_t}$ to $V^*$  depending on orientation (notice that this is a \emph{negative} half-twist, hence the use of $t^{-1}$). 
\end{enumerate}
\end{Proposition}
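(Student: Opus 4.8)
The plan is to exhibit $\THRIB(\Hopf\text{-rep})$ by generators and relations, in the Reshetikhin--Turaev style already used for $\DR$, then to define $\cF$ on the generators and check that it respects the relations. First I would record the topological fact that underlies everything: the positive half-twist on an unshaded up-strand is an isomorphism $V \to V^{C_t}$ whose inverse is the negative half-twist, and likewise on down-strands. Consequently every shaded interval is isomorphic in $\THRIB$ to its unshaded counterpart, every object is isomorphic to an all-unshaded (i.e. $\DR$) object, and by slicing a diagram into horizontal levels one sees that every morphism is a composite of tensor products of the following \emph{primitive generators}: unshaded caps and cups, together with single-strand positive and negative half-twists whose \emph{lower} strand is unshaded. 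The shaded caps, shaded cups, and the half-twists issuing from a shaded strand are then not independent data; each is forced to equal a composite of primitives, obtained by first applying half-twists to unshade the relevant strands.

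Given this reduction, uniqueness is immediate: condition (i) fixes $\cF$ on the unshaded caps and cups, condition (iii) fixes it on the half-twists from unshaded strands, and condition (ii) fixes it on objects, so $\cF$ is determined on every primitive generator and hence on every morphism.

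For existence I would set $\cF$ equal to the prescribed values on the primitives, extend by juxtaposition and vertical stacking (so that monoidality holds by construction), and verify the relations. These split into two families. The relations involving only unshaded caps, cups and crossings are exactly the defining relations of $\DR(\Hopf\text{-rep})$ --- recalling that the crossing is itself the half-twist composite of (\ref{main_isotopy}) --- and hold because $\cF'$ is already a functor. The genuinely new relations involve half-twists and shading; the basic ones are that a positive half-twist followed by a negative half-twist is the identity, which becomes $t^{-1}t = tt^{-1} = 1$, together with the isotopies that slide a half-twist past a cap or cup. The observation that makes all of these go through for an \emph{arbitrary} invertible $t$ is that condition (iii) constrains only half-twists whose lower strand is unshaded, while a half-twist issuing from a shaded strand is instead \emph{determined} by consistency. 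For instance, the positive full twist on an unshaded strand lies in $\DR(\Hopf\text{-rep})$, so $\cF$ of it must equal $\cF'$ of it, namely $v^{-1}$; factoring the full twist as a positive half-twist $V \to V^{C_t}$ (sent to $t$) followed by a positive half-twist $V^{C_t} \to V$ forces the latter to be $v^{-1}t^{-1}$, and this imposes no condition on $t$. Each slide or turnback relation is treated the same way: conjugating it by the half-twist isomorphisms $t^{\pm 1}$ rewrites it as a relation among unshaded morphisms, where it already holds because $\cF'$ is a functor.

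The main obstacle is this last step, and within it the bookkeeping needed to confirm that every relation of $\THRIB$ involving the half-twist --- not merely the full-twist relation above, but all the sliding and turnback moves that relate the shaded caps and cups to the unshaded ones --- really does reduce, after conjugation by the half-twist isomorphisms, to a relation already verified for $\cF'$. Organizing the finite list of such moves and checking that none of them places a constraint on $t$, so that the shaded generators are consistently definable for every invertible $t$, is the heart of the argument; once this is done, well-definedness, monoidality, and agreement with $\cF'$ on $\DR(\Hopf\text{-rep})$ all follow.
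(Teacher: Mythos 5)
Your proposal is viable and lands in the same Reshetikhin--Turaev circle of ideas, but it takes a genuinely different route from the paper's. The paper does not present $\THRIB(\Hopf\text{-rep})$ by generators and relations at all: it invokes a \emph{normal form}, namely that every morphism is isotopic to one in which a positive half-twist sits next to each shaded boundary interval at the bottom, a negative half-twist next to each shaded interval at the top, and everything in between is an ordinary ribbon tangle in $\DR(\Hopf\text{-rep})$ --- and, crucially, that two such diagrams are isotopic if and only if their middle parts are. (This is the ``coupons on a single ribbon'' special case of \cite{RT1}.) Uniqueness and existence then both fall out at once: the middle part is governed by $\cF'$, the boundary half-twists by $t^{\pm1}$, and functoriality reduces to $t t^{-1}=1$. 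Your route instead slices diagrams into levels, takes the single-strand half-twists as new generators alongside the $\DR$ generators, and proposes to verify all relations of $\THRIB$ by conjugating each one back into $\DR$. The conjugation mechanism you describe is exactly right, and your observation that condition (iii) constrains only half-twists issuing from unshaded strands (so that no condition is imposed on $t$ beyond invertibility) is the correct reason the proposition holds for arbitrary invertible $t$. The cost of your route is that it presupposes a complete presentation of $\THRIB(\Hopf\text{-rep})$ by generators and relations --- the ``finite list of moves'' you defer to at the end --- and that presentation is not available off the shelf; establishing it is essentially equivalent to (and no easier than) the normal-form statement the paper uses, which is precisely the device that lets the paper avoid enumerating relations altogether. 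So the two arguments buy the same theorem, but the paper's normal form front-loads the topology into one isotopy statement, whereas yours front-loads it into a presentation theorem that would still need to be proved.
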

\begin{proof}
This is the simplest case of Reshetikhin and Turaev's more general extension of $\cF'$ to ribbon graphs with coupons \cite{RT1}, where we think of the half-twists defined above as coupons on a single ribbon.  So we just sketch the proof.  Every morphism in $\THRIB(\Hopf \text{-rep})$ is isotopic to ribbon in the following standard form: every shaded object at the bottom has a positive half-twist next to it, then in the middle of the diagram there's a ribbon tangle, then at the top of the diagram every shaded object has a negative half-twist next to it.  Furthermore two such diagrams are isotopic if and only if the ribbon tangles in the middle of their standard diagrams are isotopic.  Hence there is at most one possible functor $\cF$.  That $\cF$ is in fact a functor follows from the fact that $t$ and $t^{-1}$ are inverses of each other, so that composition of diagrams is just given by composing their middle parts.
\end{proof}

The following lemma shows how $\cF$ acts on those elementary morphisms not already specified in the statement of Proposition \ref{hfunctor}.

\begin{Lemma} \label{shadedinfo}
If $(\Hopf, t)$ is a half-ribbon Hopf algebra, and $\cF$ is defined as in Proposition \ref{hfunctor}, then the following formulas hold.

\begin{enumerate} 
\item $\cF$ sends any positive half-twist to $t$ acting on $\cF$ of the bottom object regardless of orientation or shading.  $\cF$ sends any negative half-twist to $t^{-1}$ regardless of orientation or shading.  

\item \label{crossingcondition} $\cF$ takes any simple crossing to $\Flip \circ R$ acting on $\cF$ of the bottom object, regardless of orientations and shadings.

\item \label{capscondition} $\cF$ sends shaded caps and cups to the following maps 
\begin{align}
\label{cupcap1} & \cF \left( \sudcap[V] \right) =  \ev_{{}^*(V^{C_t})}: v \otimes f \mapsto f(v) \\
\label{cupcap2} & \cF \left( \sducap[V] \right)=  \qtr_{{}^*(V^{C_t})} :f \otimes v \mapsto f(\pi_V(g) v) \\
\label{cupcap3} & \cF \left( \sudcup[V] \right)=\coqtr_{{}^*(V^{C_t})}: 1 \mapsto \sum_i \pi_V(g)^{-1} v_i \otimes v^i \\
\label{cupcap4} & \cF \left( \sducup[V] \right) \coev_{{}^*(V^{C_t})}: 1\mapsto \sum_i v^i \otimes  v_i.
\end{align}
For the explicit formulas, we have used the fact that $V^{C_t}$ (respectively $(V^*)^{C_t}$) is identical to $V$ (respectively $V^*$) as a vector spaces, and chosen dual basis $v_i$ and $v^i$ for $V$ and $V^*$. 
\end{enumerate}
\end{Lemma}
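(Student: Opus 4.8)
The plan is to reduce every elementary morphism appearing in the three claims to the data already pinned down in Proposition \ref{hfunctor} — the four named single half-twists, the unshaded caps and cups, and the unshaded crossing inherited from $\cF'$ — and then to finish each case by a short computation with the half-ribbon identities $R=(t^{-1}\otimes t^{-1})\Delta(t)$, $v=t^{-2}$, and $g=S(t)t^{-1}=S(t^{-1})t$ from Proposition \ref{theelements}, together with the facts that $t^2$ is central and that $C_t$ is a coalgebra anti-automorphism (Proposition \ref{hrprops}). Since $\cF$ is already known to exist and be unique, there is no circularity: each identity below is an equality of morphisms forced by an isotopy, and I simply evaluate both sides.

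For (1), I would treat the four half-twists not named in Proposition \ref{hfunctor} — the positive half-twists on a shaded strand and the negative half-twists on an unshaded strand, in both orientations — by composing them with the named ones. For instance, the positive half-twist $V^{C_t}\to V$ on a shaded-up strand, followed by the named positive half-twist $V\to V^{C_t}$, is isotopic to a full positive twist on an all-unshaded ribbon; since $\cF$ agrees with $\cF'$ on $\DR(\Hopf\text{-rep})$ and $\cF'$ sends the full positive twist to $v^{-1}=t^{2}$, I can solve $\cF(\text{pos.\ HT on shaded})\circ t = t^{2}$, giving the value $t$. The negative cases and the downward (dual) orientations are handled identically, the relevant composites now being identities or the full negative twist $v=t^{-2}$. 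This establishes that every positive half-twist is sent to $t$ and every negative one to $t^{-1}$, regardless of shading or orientation.

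For (2), the unshaded crossing is already $\Flip\circ R$ by Theorem \ref{ribbonfunctor}(4). Any shaded crossing is obtained from the unshaded one by attaching a single half-twist to each incoming or outgoing leg whose shading must change; by part (1) each such half-twist contributes a factor $t^{\pm1}$ on the corresponding tensor factor. It then remains to check that the resulting map agrees with $\Flip\circ R$ reread on the now-shaded objects. This is exactly where the special form of $R$ is used: writing $R=(t^{-1}\otimes t^{-1})\Delta(t)$ and pushing the $t^{\pm1}$ factors through, the intertwining identity $R\,\Delta(x)=\Delta^{\mathrm{op}}(x)\,R$ together with the anti-automorphism property of $C_t$ lets one recognize the composite as $\Flip\circ R$ between the shaded representations. (Equivalently, one may read this off the defining isotopy \eqref{main_isotopy}.)

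For (3), I would build each shaded cap or cup from the corresponding unshaded one of Theorem \ref{ribbonfunctor}(2) by attaching a half-twist to each of its two legs, so that by part (1) the shaded map equals the unshaded $\ev$, $\qtr$, $\coev$, or $\coqtr$ composed with factors $t^{\pm1}$. Simplifying these composites is the crux: the surviving factors must reorganize into the grouplike element $g=S(t)t^{-1}$ (and its inverse, for $\qtr$ and $\coqtr$) and into the plain pairing (for $\ev$ and $\coev$) of the right dual ${}^*(V^{C_t})$, using Proposition \ref{theelements} and the identification $(V^*)^{C_t}={}^*(V^{C_t})$ from Comment \ref{lr_comment}. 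Producing exactly the factor $g$ — and not a stray $S(t)$ or $t^{-2}$ — while keeping the orientations and the left/right-dual conventions straight is the main obstacle; I expect this bookkeeping, rather than any conceptual difficulty, to be where the real work lies.
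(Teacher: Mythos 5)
Your proposal is correct and follows essentially the same route as the paper: reduce each unnamed elementary morphism by isotopy to the half-twists, unshaded caps/cups, and crossing already fixed by Proposition \ref{hfunctor}, then compute with $t^2=v^{-1}$, naturality of the braiding, and $g=S(t)t^{-1}=S(t^{-1})t$ (for the shaded cups one also changes dual bases via $v^i=te^i$, $v_i=S(t)^{-1}e_i$, which is the bookkeeping you anticipate). The only cosmetic difference is in part (2): the paper gets by with naturality of the braiding alone and explicitly notes that the formula $R=(t^{-1}\otimes t^{-1})\Delta(t)$ is not needed there, whereas you invoke it; your computation still goes through, since the intertwining identity you cite is just naturality expressed on the universal $R$-matrix.
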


\begin{Comment}
In order to apply the maps $ \ev_{{}^*(V^{C_t})}$ etc. above, one must use the canonical isomorphism $({}^*(V^{C_t}))^* \cong V^{C_t}$, and the equality $(V^*)^{C_t} = {}^*(V^{C_t})$ from Comment \ref{lr_comment}. It is a straightforward excercise to show that these maps are given in coordinates by the above formulas. 
\end{Comment}

\begin{proof}[Proof of Lemma \ref{shadedinfo}]
We prove each of these formulas by a direct calculation.

For the half-twists and ribbon full-twists we only use that $t^2 = v^{-1}$.  For example, the positive half-twist going from unshaded to shaded is a composition of the negative half-twist with the full twist.  So this result follows from $t^{-1} v^{-1} = t$.

The formulas for $\cF$ applied to a crossing with one or both of the ribbons shaded all follow from the naturality of the braiding. For example, to find $\cF$ of a crossing where one strand is shaded, consider the following isotopy:
\begin{align*}
\sucrossingisotopy
\end{align*}
Thus $\cF$ of the left side must be equal to $(t \otimes 1) \circ \Flip \circ R  \circ (1 \otimes t^{-1})$. By the naturality of the braiding, 
\begin{align}
(t \otimes 1) \circ \Flip \circ R  \circ (1 \otimes t^{-1}) &= \Flip \circ R \circ (1 \otimes t) \circ (1 \otimes t^{-1}) = \Flip \circ R.
\end{align}
This holds independently of the orientations of the ribbons. A similar argument gives the same formula for crossings where the other ribbon is shaded, or where both are shaded.

It remains to compute the formula for shaded cups and caps.   These equations use $g = S(t) t^{-1}= S(t^{-1}) t$.  We explicitly show two of the four cases. The other two are similar.

By the definition of $\cF$, one has
\begin{align}
\cF \left( \sudcap[V] \right)(v \otimes f) &=
\cF \left( \begin{picture}(4,3)
\put(2,-1.75){\psdv[V]}
\put(0,-1){\ptwistsub}
\put(0,.9){\ucap}
\put(2,-1){\pnegtwistsub}
\put(0,-1.75){\psuv[V]}
\end{picture}
 \right) (v \otimes f) \label{shadedcap1} \\
&= \qtr (t v \otimes t^{-1} f) \label{shadedcap2} \\
& = t^{-1} f(g t v) \label{shadedcap3} \\
&= f(S(t^{-1}) g t v)\label{shadedcap4} \\
& = f(v) \label{shadedcap5}.
 \end{align}
Here (\ref{shadedcap1}) is an isotopy, (\ref{shadedcap2}) follows from functoriality and our computation of $\cF$ on all half-twists, (\ref{shadedcap3}) is the definition of $\qtr$, (\ref{shadedcap4}) is the definition of the action on the dual space, and (\ref{shadedcap5}) uses the formula $g =  S(t) t^{-1}$.

Now for a cup. Notice that, if $e^i \in V^*$ and $e_j \in V$ are dual bases and $x$ is an invertible element in $\Hopf$, then $v^i = x e^i$ and $v_i = S(x)^{-1}e_i$ are also dual bases. By the definition of $\cF$, we have:
\begin{align}
\cF \left( \sudcup[V] \right)(1) &=
\cF \left( \begin{picture}(4,3)
\put(0,1.5){\psuv[V]}
\put(0,-0.2){\ptwistusb}
\put(2,-0.2){\pnegtwistusb}
\put(2,1.5){\psdv[V]}
\put(0,-1.6){\ucup}
\end{picture}
 \right) (1) \label{shadedcup1} \\
&=  \sum t e_i \otimes t^{-1} e^i \label{shadedcup2} \\
&= \sum t S(t^{-1}) v_i \otimes v^i \label{shadedcup3} \\
&= \sum g^{-1} v_i \otimes v^i \label{shadedcup4}
\end{align}
Here (\ref{shadedcup1}) is an isotopy, (\ref{shadedcup2}) follows from functoriality and our computation of $\cF$ on all half-twists, (\ref{shadedcup3}) is the change of dual bases described above with $x = t$, and (\ref{shadedcup3}) uses the formula $g = S(t) t^{-1}$.
\end{proof}

Note that above we used the formulas relating $t$ to $g$ (which was derived using the formula relating $R$ and $t$), but we did not use the formula for $R$ directly.  That formula is used in the following proposition which computes how $\cF$ acts on the half-twist applied to many strands.

\begin{Proposition} \label{bighalftwist} Suppose that $(\Hopf,t)$ is a half-ribbon Hopf algebra, let $\cF$ be the unique functor guaranteed by Proposition \ref{hfunctor}.  Then, $\cF(I_n)=  \rev \circ \Delta^n(t).$ 
\end{Proposition}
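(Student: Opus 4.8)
The plan is to reduce everything to the two facts already in hand from Lemma \ref{shadedinfo}: that $\cF$ sends every elementary crossing to $\Flip \circ R$ and every single positive half-twist to $t$, in both cases independently of shading and orientation. Combined with the observation that $\Delta(t) = (t\otimes t)R$ (the defining relation of a half-ribbon element, rewritten), this should determine $\cF(I_n)$ from a convenient decomposition of the big half-twist into elementary pieces. The first step is to record the isotopy, implicit in the definition of $I_n$, that writes the half-twist on $n$ strands as a composite
$$I_n \simeq (\text{$n$ individual positive half-twists}) \circ \beta,$$
where $\beta$ is the twist-free positive braid of ribbons reversing the order of the $n$ strands (a lift of the longest element $w_0 \in S_n$). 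This separation is the useful one because $\beta$ carries no per-ribbon twisting, so all of the twisting in $I_n$ is concentrated in the $n$ half-twists on top; verifying the isotopy itself is a routine topological check.

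Applying Lemma \ref{shadedinfo} part (1) to each of these half-twists gives $t^{\otimes n}$ (this is exactly the point at which shading- and orientation-invariance of the single half-twist is needed), and applying part (2) to each crossing of $\beta$ gives a composite of maps $\Flip \circ R$. Since $t^{\otimes n}$ commutes with order-reversal, this yields
$$\cF(I_n) = t^{\otimes n}\circ \cF(\beta) = \rev \circ \big(t^{\otimes n}\,\check P_n\big),$$
where $\check P_n \in \End(V_1\otimes\cdots\otimes V_n)$ is the product of $R$-matrices obtained by resolving the accumulated $\Flip$'s in $\cF(\beta)$. It therefore remains to prove the purely algebraic identity $t^{\otimes n}\check P_n = \Delta^n(t)$, equivalently $\check P_n = (t^{-1})^{\otimes n}\Delta^n(t)$.

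I would prove this identity by induction on $n$. The cases $n=1$ (the single-strand statement $\cF(I_1)=t$) and $n=2$ (which is exactly $\check P_2 = R = (t^{-1}\otimes t^{-1})\Delta(t)$, the definition of the half-ribbon element) are immediate. For the inductive step I would choose the reduced word for $w_0$ that peels off one strand, so that $\beta$ factors as the reversing braid on $n-1$ strands followed by a single strand braided across the rest; because $\cF$ is already known to be a functor by Proposition \ref{hfunctor}, the braid relations hold automatically for the images of the crossings, and I am free to use whichever reduced word matches the coproduct recursion without separately checking word-independence. Expanding $\Delta^n(t) = (\Delta\otimes \id^{\otimes(n-2)})\Delta^{n-1}(t)$, substituting the inductive form $\Delta^{n-1}(t)=t^{\otimes(n-1)}\check P_{n-1}$, and using $(\Delta\otimes\id^{\otimes(n-2)})(t^{\otimes(n-1)}) = t^{\otimes n}R_{12}$ together with the quasitriangularity relation $(\Delta\otimes\id)R = R_{13}R_{23}$ to distribute $\Delta$ over each $R$-factor of $\check P_{n-1}$, should reproduce precisely the product $t^{\otimes n}\check P_n$ coming from the enlarged braid.

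I expect the main obstacle to be organizational rather than conceptual: carefully matching the product of $R$-matrices produced by $\cF(\beta)$ — in particular how the accumulated $\Flip$'s assemble into the single reversal $\rev$ and how the tensor indices get shifted — with the product produced by iterating the coproduct and applying quasitriangularity. The shading and orientation invariance that would otherwise multiply the number of cases is already absorbed into Lemma \ref{shadedinfo} parts (1) and (2), so no separate case analysis is needed there, and coassociativity guarantees that the two natural recursions $(\Delta\otimes\id^{\otimes(n-2)})\Delta^{n-1}$ and $(\id^{\otimes(n-2)}\otimes\Delta)\Delta^{n-1}$ agree, which is what lets the topological and algebraic inductions line up.
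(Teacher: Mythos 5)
Your proposal is correct and follows essentially the same route as the paper: the same isotopy decomposing $I_n$ into $n$ single half-twists composed with the order-reversing braid, the same induction peeling one strand off the longest braid word, and the same inputs (Lemma \ref{shadedinfo}, quasitriangularity, and the defining relation $R=(t^{-1}\otimes t^{-1})\Delta(t)$). The only cosmetic difference is that you isolate the algebraic identity $t^{\otimes n}\check P_n=\Delta^n(t)$ and induct on it separately, whereas the paper runs the induction directly on $\cF(I_n)$, identifying $\cF(\sigma^{(n)})$ with $s^{(n)}\circ(1\otimes\Delta^{n-1})(R)$ in one step.
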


\begin{proof} We proceed by induction on $n$, the case $n=1$ being trivial. 
Consider the isotopy
\begin{center}
\setlength{\unitlength}{0.3cm}
\begin{picture}(20, 7.3)
\put(0,0){\ttopns} 
\put(-0.417,4.9){\ssid}
\put(1.583,4.9){\ssid}
\put(4.583,4.9){\suid}
\put(6.583,4.9){\ssid}
\put(3.9,4.8){$. . .$}
\put(3.9,0){$. . .$}

\put(10,3){$\simeq$}
\put(12,-0.5){\makeit}
\put(22, 0){$.$}
\end{picture}
\setlength{\unitlength}{0.7cm}
\end{center}
Thus, by the definition of $\cF$, 
\begin{equation} \label{FI}  \cF(I_n)= (t \otimes t \otimes \cdots \otimes t) \cF(T^{(n)}_{w_0}), \end{equation}
where $T^{(n)}_{w_0}$ is the braid group element corresponding to the longest element of $S_n$. 
Let $\sigma^{(n)}$ be the braid group element 

\begin{center}
\dasigma
\end{center}
Let $s^{(n)}$ be the image of $\sigma^{(n)}$ in $S_n$, and let $\rev^{(n)}$ denote the longest element of $S_n$. Clearly $T_{w_0}^{(n)} = (T_{w_0}^{(n-1)} \otimes 1) \circ \sigma^{(n)}.$ Hence
\begin{align}
\cF (I_n) 
\label{r2}&= (t \otimes t \otimes \cdots \otimes t) \circ \cF(T_{w_0}^{(n)}) \\
\label{r3}&= (t \otimes t \otimes \cdots \otimes t) \circ (\cF(T_{w_0}^{(n-1)}) \otimes 1) \circ \cF(\sigma^{(n)}) \\
\label{r4}&= (\cF(I_{n-1}) \otimes t) \circ \cF(\sigma^{(n)}) \\
\label{r5}&=  \left(\rev^{(n-1)} \Delta^{n-1} (t) \otimes t \right) \circ s^{(n)} \circ (1 \otimes \Delta^{n-1}) (R) \\
\label{r6}&=  \left( \rev^{(n-1)} \Delta^{n-1} (t) \otimes t \right) \circ s^{(n)} \circ (1 \otimes \Delta^{n-1}) \left( (t^{-1} \otimes t^{-1})\Delta (t) \right) \\
\label{r7} &= \rev^{(n)} \Delta^n(t).
\end{align}
Here \eqref{r4} follows from Equation \eqref{FI} in the case $n-1$, \eqref{r5} follows from the inductive assumption and the quasitriangularity of $R$, \eqref{r6} holds because $t$ is a half-ribbon element, and \eqref{r7} is a straightforward calculation.

Note that this proof works regardless of the orientations and shadings of the ribbons.
\end{proof}

\begin{proof}[Proof of Theorem \ref{nice-hfunctor}]
Using the isotopy from Equation \eqref{main_isotopy}, the crossing is a composition of the two strand topological half-twist and two copies of the inverse of the one strand topological half-twist. Thus the conditions in the statement uniquely determine $\cF$ on all the elementary morphisms listed in the condition of Proposition \ref{hfunctor}. So Proposition \ref{hfunctor} shows that there is a unique candidate for $\cF$. Proposition \ref{bighalftwist} shows that this $\cF$ satisfies the remaining condition of the theorem.
\end{proof}

\section{Ribbon and Half-ribbon elements for $U_q(\g)$} \label{fiveaa}

We show that $U_q(\g)$ is always a topological half-ribbon Hopf algebra. That is, there exists an element $X$ in a completion of $U_q(\g)$ such that $(X^{-1} \otimes X^{-1}) \Delta(X)$ is the standard $R$-matrix, and $X^{-2}$ is a ribbon element. Interestingly, $X^{-2}$ is not the standard ribbon element $C$. We also classify the different ribbon elements for $U_q(\g)$, and discuss how one might decide which of these arise from half-ribbon elements. In particular, we show that the standard ribbon element for $U_q(\fsl_2)$ does not arise from a half-ribbon element. 
We then explain some consequences of varying the ribbon element. It turns out the the ribbon element $X^{-2}$ is in some ways particularly nice (see Lemma \ref{fs1} and Section \ref{dc_section}). 

\subsection{A half-ribbon Hopf algebra structure on $U_q(\g)$}

We first need the following result relating different ribbon elements for the same quasi-triangular Hopf algebra.

\begin{Lemma} \label{other_ribbons}
Let $(\Hopf, R,v)$ be a ribbon Hopf algebra, and $s \in \Hopf$ be a central grouplike element that squares to 1. Then  $(\Hopf, R, v s)$ is also a ribbon Hopf algebra. Furthermore all ribbon elements for $(\Hopf, R)$ are of the form $vs$ for some such $s$.
\end{Lemma}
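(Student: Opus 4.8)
The plan is to verify the four ribbon-element axioms of Definition \ref{ribbon_def} for $vs$, and then separately prove the classification. Throughout I will use that $s$ is central, grouplike ($\Delta(s)=s\otimes s$, $\varepsilon(s)=1$), and satisfies $s^2=1$; since $s$ is grouplike it is invertible with $s^{-1}=S(s)$, and because $s$ is central and $S$ is an anti-automorphism one checks $S(s)=s^{-1}=s$. The key point is that $R$ is unchanged, so $u=\mu(S\otimes\id)R_{21}$ is unchanged, and the first axiom asks only that $(vs)^2=uS(u)$. Since $s$ is central and squares to $1$, we get $(vs)^2=v^2s^2=v^2=uS(u)$, so axiom (\ref{up}) survives. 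For axiom (2), $S(vs)=S(s)S(v)=s\,v=vs$ using $S(v)=v$, centrality, and $S(s)=s$. For axiom (3), $\varepsilon(vs)=\varepsilon(v)\varepsilon(s)=1$. For axiom (\ref{deltp}), I compute $\Delta(vs)=\Delta(v)\Delta(s)=(v\otimes v)(R_{21}R_{12})^{-1}(s\otimes s)$, and since $s$ is central I may pull $(s\otimes s)$ across the $R$-factors to obtain $(vs\otimes vs)(R_{21}R_{12})^{-1}$, as required. Finally $vs$ is central (product of central elements) and invertible (both factors are), completing the proof that $(\Hopf,R,vs)$ is a ribbon Hopf algebra.

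For the classification, suppose $v'$ is any ribbon element for $(\Hopf,R)$. Set $s:=v^{-1}v'$; the goal is to show $s$ is central, grouplike, and squares to $1$. Centrality of $s$ is immediate since ribbon elements are central. For $s^2=1$: axiom (\ref{up}) applied to both $v$ and $v'$ gives $v^2=uS(u)=v'^2$, and since $v,v'$ are central this yields $s^2=v^{-2}v'^2=1$. The remaining task is to show $s$ is grouplike, i.e.\ $\Delta(s)=s\otimes s$. Here I use axiom (\ref{deltp}) for both elements: $\Delta(v)=(v\otimes v)(R_{21}R_{12})^{-1}$ and $\Delta(v')=(v'\otimes v')(R_{21}R_{12})^{-1}$. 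Dividing (using that everything in sight is central and invertible) gives
\begin{equation}
\Delta(s)=\Delta(v)^{-1}\Delta(v')=(v\otimes v)^{-1}(v'\otimes v')=(v^{-1}v')\otimes(v^{-1}v')=s\otimes s,
\end{equation}
where the $(R_{21}R_{12})^{-1}$ factors cancel because they are the same for both $v$ and $v'$ and can be commuted past the central group-like factors. Thus $s$ is grouplike, and I should also record $\varepsilon(s)=\varepsilon(v)^{-1}\varepsilon(v')=1$, consistent with group-likeness.

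I expect the main obstacle to be the careful bookkeeping in axiom (\ref{deltp}) and in the division step of the classification: one must justify commuting the central element $s$ (or $s\otimes s$) past the non-central factors $(R_{21}R_{12})^{-1}$. This is legitimate precisely because $s$ is assumed central in $\Hopf$, hence $s\otimes s$ is central in $\Hopf\otimes\Hopf$; I will state this explicitly rather than leave it implicit, since it is the one place where the hypothesis that $s$ is \emph{central} (not merely grouplike) is genuinely used. A secondary subtlety is the identity $S(s)=s$, which I would justify from $S(s)=s^{-1}$ (valid for any grouplike element) together with $s^2=1$; this keeps axiom (2) clean. All of these are short and essentially formal once the hypotheses are unpacked, so the proof is mostly a matter of presenting the four verifications and the three-property classification in order.
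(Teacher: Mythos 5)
Your proof is correct, and it simply carries out in full the verification that the paper itself dismisses in one line as ``a straightforward application of the definition of a ribbon element'' (citing Barrett--Westbury); there is no difference in approach, only in the level of detail. All the key points --- $S(s)=s^{-1}=s$ for a grouplike involution, centrality of $s\otimes s$ letting you commute past $(R_{21}R_{12})^{-1}$, and extracting $s^2=1$ and group-likeness of $v^{-1}v'$ from axioms (i) and (iv) --- are handled correctly.
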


\begin{proof}
This is a straightforward application of the definition of a ribbon element (cf. \cite[Remark 3.4]{Spherical}).
\end{proof}

\begin{Theorem} \label{ishalf} Let $X$ be the element from Theorem \ref{sR}. Then 
$(U_q(\g), X)$ is a topological half-ribbon Hopf algebra. Furthermore $(X^{-1} \otimes X^{-1}) \Delta(X)$ is the standard $R$-matrix.
\end{Theorem}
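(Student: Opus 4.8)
The plan is to leverage everything already proved about $X$ and to reduce the claim to a single application of Lemma \ref{other_ribbons}. The ``Furthermore'' assertion is immediate: Theorem \ref{sR} states precisely that $(X^{-1} \otimes X^{-1})\Delta(X)$ is the standard $R$-matrix, so $(U_q(\g), R)$ is quasi-triangular with this $R$. It therefore remains only to show that $v := X^{-2}$ is a ribbon element compatible with $R$, i.e.\ that $(U_q(\g), R, X^{-2})$ is a topological ribbon Hopf algebra.

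First I would record how $X^{-2}$ acts on each irreducible. By Lemma \ref{Xprops} part \eqref{X2prop}, $X^2$ is central and acts on $V_\lambda$ as the scalar $(-1)^{\langle 2\lambda, \rho^\vee\rangle} q^{(\lambda,\lambda)+2(\lambda,\rho)}$, so $X^{-2}$ is central and acts on $V_\lambda$ as $(-1)^{\langle 2\lambda, \rho^\vee\rangle}q^{-(\lambda,\lambda)-2(\lambda,\rho)}$. Comparing with Definition \ref{defC}, where the standard ribbon element $C$ acts on $V_\lambda$ as $q^{-(\lambda,\lambda)-2(\lambda,\rho)}$, I can write $X^{-2} = Cs$, where $s \in \widetilde{U_q(\g)}$ is the element acting on each $V_\lambda$ by the scalar $(-1)^{\langle 2\lambda, \rho^\vee\rangle}$.

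The heart of the argument is then to verify that $s$ satisfies the three hypotheses of Lemma \ref{other_ribbons}: it is central, grouplike, and squares to $1$. Centrality is automatic, since $s$ acts as a scalar on each irreducible and hence lies in the center of the product of endomorphism rings described by Theorem \ref{directproduct}. That $s^2 = 1$ is equally immediate, as $s^2$ acts on $V_\lambda$ by $\big((-1)^{\langle 2\lambda, \rho^\vee\rangle}\big)^2 = 1$. The grouplike condition $\Delta(s) = s \otimes s$ is the one genuine computation: because $s$ is central it acts by a scalar $c_\nu$ on every irreducible summand $V_\nu \subseteq V_\lambda \otimes V_\mu$, and $\Delta(s) = s \otimes s$ holds exactly when $c_\nu = c_\lambda c_\mu$ for all such $\nu$. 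Writing $\nu = \lambda + \mu - \beta$ with $\beta \in Q$, this reduces to the parity claim $(-1)^{\langle 2\beta, \rho^\vee\rangle} = 1$; since $\langle \alpha_i, \rho^\vee\rangle = 1$ for all $i$, expanding $\beta = \sum_i n_i \alpha_i$ gives $\langle 2\beta, \rho^\vee\rangle = 2\sum_i n_i \in 2\mathbb{Z}$, so the sign is $+1$. I expect this to be the step requiring the most care, chiefly in making precise that a tensor product of Type $1$ representations decomposes into irreducibles whose highest weights differ from $\lambda + \mu$ by elements of the root lattice $Q$.

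With these three properties established, Lemma \ref{other_ribbons} applied to the standard (topological) ribbon Hopf algebra $(U_q(\g), R, C)$ of Theorem \ref{normal_ribbon} shows that $(U_q(\g), R, Cs) = (U_q(\g), R, X^{-2})$ is again a topological ribbon Hopf algebra. Since $R = (X^{-1}\otimes X^{-1})\Delta(X)$ and $X^{-2}$ is its ribbon element, the pair $(U_q(\g), X)$ satisfies the definition of a topological half-ribbon Hopf algebra, which completes the proof.
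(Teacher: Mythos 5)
Your proposal is correct and follows essentially the same route as the paper: both reduce to Lemma \ref{other_ribbons} by writing $X^{-2} = Cs$ with $s = X^2C$ acting on $V_\lambda$ by $(-1)^{\langle 2\lambda,\rho^\vee\rangle}$, and both verify centrality, order two, and grouplikeness via the same parity argument on the root lattice. No gaps.
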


\begin{proof}
By Theorem \ref{sR}, $(U_q(\g), R)$ is a quasi-triangular Hopf algebra, where $R= (X^{-1} \otimes X^{-1}) \Delta(X)$ is the standard $R$ matrix. Thus it suffices to show that $X^{-2}$ is a ribbon element for this quasi-triangular Hopf algebra. We already know that $(U_q(\g), R,C)$ is a ribbon Hopf algebra. Thus by Lemma \ref{other_ribbons} it suffices to show that 
\begin{enumerate}
\item $X^2C$ is central 
\item $X^2C$ is grouplike
\item $(X^2C)^2=1$.
\end{enumerate}

(i): By Lemma \ref{Xprops} part (\ref{X2prop}) and Definition  \ref{defC}, $X^2C$ acts on $V_\lambda$ as multiplication by the scalar $(-1)^{\langle 2\lambda, \rho^\vee \rangle}$. Thus it is clearly central. 

(ii): Any highest weight $\nu$ for $V_\lambda \otimes V_\mu$ has weight $\lambda + \mu - \gamma$ for some $\gamma$ in the root lattice. Since $\langle 2\gamma, \rho^\vee \rangle$ is an even integer for any $\gamma$ in the root lattice, it follows that $X^2C$ is grouplike.

(iii): $(X^2 C)^2$ acts on $V_\lambda$ by  $(-1)^{2 \cdot \langle 2\lambda, \rho^\vee \rangle} = 1$.
\end{proof}

\subsection{The Frobenius-Schur indicator} We must now discuss a tool for comparing ribbon elements. 
The following definition of the Frobenius-Schur indicator for a pivotal category was given by \cite{PivotalFS} (see also, \cite{HopfFS} for the case of Hopf algebras and \cite{FS} for $C^*$ sovereign categories). 

Recall that a pivotal structure is a natural collection of maps $p_V: V \rightarrow V^{**}$ which defines a natural isomorphism $\mathrm{Id} \rightarrow **$ of monoidal functors \cite{Spherical}. The category of representations of a ribbon Hopf algebra is always pivotal with the pivotal structure being given by the grouplike element $g = v^{-1}u$ as follows: Fix $v \in V$. Then $p_V(v)$ is the element of $V^{**}$ defined by, for all $f \in V^*,$ $p_V(v)(f)= f(gv)$ (see for example \cite{CP}).

\begin{Definition}
Given an $F$-linear category with a chosen pivotal structure $p$, define the Frobenius-Schur indicator of an absolutely simple object $V$ as follows:  If $V \not \cong V^*$, then $FS_p(V) = 0$.  Otherwise, choose an isomorphism $f: V \rightarrow V^*$.  By Schur's lemma there exists some constant, which we define to be $FS_p(V)$, such that $f = FS_p(V)  f^* \circ p$. 
\end{Definition}

\begin{Comment}
As shown in \cite{PivotalFS},  $FS_p(V)$ does not depend on the choice of $f$, and we have $FS_p(V) = \pm 1$ (or 0).
\end{Comment}

\begin{Comment}
Notice that the Frobenius-Schur indicator depends on the pivotal structure, which in turn depends on the choice of ribbon element.  As described in \cite{PivotalFS}, we use $FS_v(V)$ to denote the Frobenius-Schur indicator for $V$, calculated with the ribbon element $v$. 
\end{Comment}

\begin{Comment}
The reason for the term Frobenius-Schur indicator is that in the case of groups (and indeed involutory Hopf algebras \cite{HopfFS}) with the trivial pivotal structure, it agrees with the usual Frobenius-Schur indicator, which is $1$ when there is an invariant orthogonal form on $V\otimes V$, $-1$ when there is an invariant symplectic form on $V\otimes V$, and $0$ otherwise.
\end{Comment}

\begin{Lemma} \label{fs1} Let $V_\lambda$ be an irreducible representation of $U_q(\g)$ and assume $V_\lambda$ is self-dual. Then $FS_{X^{-2}}(V_\lambda) = 1$.
\end{Lemma}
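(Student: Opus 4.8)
The plan is to compute $FS_{X^{-2}}(V_\lambda)$ by comparing the pivotal structure attached to the ribbon element $X^{-2}$ with the one attached to the standard ribbon element $C$, and then invoking the classical value of the Frobenius--Schur indicator in the standard case. Since $V_\lambda$ is self-dual, I would fix an isomorphism $f \colon V_\lambda \to V_\lambda^*$ and set $B(v,w) = f(v)(w)$, an invariant nondegenerate bilinear form. Unwinding the definition of $FS_p$ together with the description of the pivotal structure $p_V(v)(h) = h(gv)$ shows that $f = FS_p(V_\lambda)\, f^* \circ p$ is equivalent to the symmetry relation $B(v,w) = FS_p(V_\lambda)\, B(w, gv)$, where $g = v^{-1}u$ is the grouplike pivotal element determined by the chosen ribbon element. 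First I would record this reformulation, which is valid for any ribbon element.

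The key structural point is that $u = \mu(S\otimes\id)R_{21}$ depends only on $R$, which is the same standard $R$-matrix for both ribbon structures. Hence the two pivotal elements differ only through their ribbon elements: writing $g_0 = C^{-1}u$ for the standard one (the usual grouplike balancing element, acting as $K_{2\rho}$) and $g_1 = (X^{-2})^{-1}u = X^2 u$ for the one attached to $X^{-2}$, I get $g_1 = (X^2 C)\,g_0 = s\,g_0$, where $s = X^2 C$. By the computation in the proof of Theorem~\ref{ishalf}(i) (using Lemma~\ref{Xprops}(\ref{X2prop}) and Definition~\ref{defC}), $s$ is a central grouplike element that acts on $V_\lambda$ by the scalar $\zeta := (-1)^{\langle 2\lambda, \rho^\vee\rangle}$.

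Now I would feed this into the symmetry relation. Because $s$ is central and acts on the simple module $V_\lambda$ by the scalar $\zeta$, we have $g_1 v = \zeta\, g_0 v$ for all $v \in V_\lambda$, so
\begin{equation*}
B(v,w) = FS_{X^{-2}}(V_\lambda)\, B(w, g_1 v) = FS_{X^{-2}}(V_\lambda)\,\zeta\, B(w, g_0 v).
\end{equation*}
Comparing with $B(v,w) = FS_C(V_\lambda)\, B(w, g_0 v)$ for the standard pivotal element and using nondegeneracy gives $FS_{X^{-2}}(V_\lambda) = \zeta\, FS_C(V_\lambda)$. Finally I would invoke the standard value $FS_C(V_\lambda) = (-1)^{\langle 2\lambda,\rho^\vee\rangle} = \zeta$ for a self-dual irreducible $U_q(\g)$-module with the usual pivotal structure; since $\zeta^2 = 1$ this yields $FS_{X^{-2}}(V_\lambda) = \zeta^2 = 1$.

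The main obstacle is this last input, the formula $FS_C(V_\lambda) = (-1)^{\langle 2\lambda,\rho^\vee\rangle}$, which is the quantum analogue of the orthogonal/symplectic dichotomy; either one cites it or one proves it by evaluating the relation $B(v,w) = FS_C(V_\lambda)B(w,g_0 v)$ on the pair $(v_\lambda, v_\lambda^{\low})$, which reduces to computing the intrinsic asymmetry ratio $B(v_\lambda, v_\lambda^{\low})/B(v_\lambda^{\low}, v_\lambda)$ of the invariant form between the highest and lowest weight lines (this ratio works out to $(-1)^{\langle2\lambda,\rho^\vee\rangle}q^{(2\rho,\lambda)}$, while $g_0$ supplies the compensating factor $q^{(2\rho,\lambda)}$). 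A secondary point requiring care is the sign and ordering bookkeeping in $g_1 = s\,g_0$, together with the observation that $u$ is common to both ribbon structures. Conceptually the result reflects that in the half-ribbon world the pivotal structure is realized by an honest $180^\circ$ rotation (the half-twist $X$), which is symmetric, so that $X^{-2}$ forces the invariant form to be symmetric and the indicator to be $+1$; this is precisely the ``annoying sign'' that $X^{-2}$ removes.
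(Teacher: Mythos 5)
Your argument is correct, but it takes a genuinely different route from the paper. The paper's proof is a direct, self-contained computation: it fixes $f\colon V_\lambda\to V_\lambda^*$, reduces the identity $f=f^*\circ p_{V_\lambda}$ to a single pairing $f(v_\lambda^{\low})(v_\lambda)=f(v_\lambda)(g\,v_\lambda^{\low})$, and verifies this using the relation $S(X)=Xg^{-1}=gX$ from Proposition \ref{theelements} together with the explicit scalars by which $X$ maps between the highest and lowest weight lines (Lemma \ref{Xprops}); no input beyond the properties of $X$ is needed. You instead (i) reformulate $FS_p$ as a symmetry statement $B(v,w)=FS_p(V_\lambda)B(w,gv)$ for the invariant form, (ii) observe that the two pivotal elements differ by the central grouplike $s=X^2C$ acting by $\zeta=(-1)^{\langle 2\lambda,\rho^\vee\rangle}$, so that $FS_{X^{-2}}=\zeta\,FS_C$ --- this is essentially the paper's Proposition \ref{howFSchanges} run in reverse, and is unproblematic since it does not presuppose Lemma \ref{fs1} --- and (iii) import the classical Frobenius--Schur formula $FS_C(V_\lambda)=(-1)^{\langle 2\lambda,\rho^\vee\rangle}$ via the deformation argument of Lemma \ref{likenormal}. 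The trade-off is clear: your route is shorter and conceptually transparent (it makes visible that $X^{-2}$ exactly cancels the classical orthogonal/symplectic sign), but it rests on the external Malcev--Bourbaki criterion for the classical indicator, which the paper never actually invokes and which you would need to cite or prove; your sketched fallback for that input (computing the asymmetry ratio $B(v_\lambda,v_\lambda^{\low})/B(v_\lambda^{\low},v_\lambda)$) would, once written out, essentially reproduce the paper's direct calculation, since that ratio is governed by the same antipode identity $S(X)=gX$. The paper's approach buys self-containedness; yours buys a cleaner explanation of why the answer is $+1$.
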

\begin{proof} 
Let $f: V_\lambda \rightarrow V_\lambda^*$ be an isomorphism.
It suffices to show that $f=f^* \circ p_{V_\lambda}$. Since both $f$ and $f^* \circ p_{V_\lambda}$ are isomorphisms, it suffices to show that their actions on $v_\lambda^\low$ agree. But $f(v_\lambda^\low)$ is in the lowest weight space of $V_\lambda^*$, which is one dimensional and pairs non-degenerately with $v_\lambda$. Thus, it in fact suffices to show that 
\begin{equation} \label{atos}
f(v_\lambda^\low)(v_\lambda)=  f^* \circ p_{V_\lambda} (v_\lambda^\low)(v_\lambda).
\end{equation}

Using the formula for $g$ given in Proposition \ref{theelements},
\begin{equation} \label{e47}
S(X)= Xg^{-1} = gX.
\end{equation}
Let $
k= (-1)^{\langle 2\lambda, \rho^\vee \rangle} q^{(\lambda, \lambda)/2+(\lambda,\rho)}.$ By Lemma \ref{Xprops} part (\ref{lhigh}), $X (v_\lambda)= k (v_\lambda^\low).$ Thus the left side of \eqref{atos} can be simplified as follows
\begin{align}
\label{lt1} f(v_\lambda^\low)(v_\lambda) &= f(k^{-1} X(v_\lambda))(v_\lambda) \\
\label{lt2} &= k^{-1} (X f(v_\lambda))(v_\lambda) \\
\label{lt3} &= k^{-1} f(v_\lambda)(S(X) v_\lambda) \\
\label{lt4} &= k^{-1} f(v_\lambda)(Xg^{-1} v_\lambda) \\
\label{lt42} &= k^{-1} f(v_\lambda)(gX v_\lambda) \\
\label{lt5} &= k^{-1} f(v_\lambda) (k g v_\lambda^\low) \\
\label{lt6} &= f(v_\lambda)(g v_\lambda^\low).
\end{align}
Here (\ref{lt2}) follows because $f$ is an isomorphism, (\ref{lt4}) and (\ref{lt42}) follow by Equation (\ref{e47}) and \eqref{lt5} by the above formula for the action of $X$ on $v_\lambda$.

Now consider the right side of \eqref{atos}. By definition, $p_{V_\lambda}(v_\lambda^\low)$ is the element of $V_\lambda^{**}$ which takes $\phi \in V_\lambda^*$ to $\phi(g v_\lambda^\low).$ Thus
\begin{equation} 
f^* \circ p_{V_\lambda} (v_\lambda^\low)(v_\lambda)= f(v_\lambda)(g v_\lambda^\low).
\end{equation}
We have shown that the two sides of \eqref{atos} agree, so the result follows.
 \end{proof}

In \cite{Turaev}, Turaev calls the property that Lemma \ref{fs1} holds unimodality and requires it when defining geometric 3j and 6j symbols. 

\begin{Lemma} \label{likenormal}
$FS_C(V)$ is the usual Frobenius-Schur indicator of the corresponding classical representation.
\end{Lemma}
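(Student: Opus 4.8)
The plan is to compare $FS_C$ with $FS_{X^{-2}}$, which was already computed in Lemma \ref{fs1}, by exploiting the fact that the two ribbon elements $C$ and $X^{-2}$ differ by an explicit central grouplike element, and then to match the resulting sign against the classical Frobenius--Schur formula. First I would dispose of the non-self-dual case: $V_\lambda^* \cong V_{-w_0\lambda}$, so $V_\lambda$ is self-dual precisely when $-w_0\lambda = \lambda$, and this condition is identical for the classical and the quantum representation. When it fails, both $FS_C(V_\lambda)$ and the classical indicator are $0$ by definition, so there is nothing to prove. Hence I assume $V_\lambda$ is self-dual.

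Next I would recall, from the proof of Theorem \ref{ishalf}, that $X^2 C$ is central, grouplike, squares to $1$, and acts on $V_\lambda$ as the scalar $\epsilon_\lambda := (-1)^{\langle 2\lambda, \rho^\vee\rangle}$ (here $\epsilon_\lambda$ is not the counit $\varepsilon$). Setting $s = (X^2 C)^{-1}$, Lemma \ref{other_ribbons} identifies $s$ as the central grouplike element with $s^2 = 1$ for which $X^{-2} = Cs$, and $s$ too acts on $V_\lambda$ as $\epsilon_\lambda$ since $\epsilon_\lambda = \pm 1$. Let $u$ be the element of Definition \ref{ribbon_def}, which depends only on $R$ and is therefore shared by both ribbon structures, and let $g_C = C^{-1}u$ and $g_X = X^2 u$ be the two associated pivotal grouplike elements. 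Then $g_X = (X^2 C)\,g_C$, so on $V_\lambda$ we have $g_X = \epsilon_\lambda\, g_C$ as operators.

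The heart of the argument is then to track how the Frobenius--Schur indicator responds to this rescaling of the pivotal element. Writing $p^C$ and $p^X$ for the pivotal structures attached to $C$ and $X^{-2}$, the defining formula $p_V(v)(f) = f(g v)$ gives $p^X_{V_\lambda} = \epsilon_\lambda\, p^C_{V_\lambda}$. Fixing an isomorphism $f: V_\lambda \to V_\lambda^*$ and comparing the two defining identities $f = FS_C(V_\lambda)\, f^* \circ p^C_{V_\lambda}$ and $f = FS_{X^{-2}}(V_\lambda)\, f^*\circ p^X_{V_\lambda}$, I would cancel the common nonzero map $f^*\circ p^C_{V_\lambda}$ to obtain $FS_C(V_\lambda) = \epsilon_\lambda\, FS_{X^{-2}}(V_\lambda)$. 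By Lemma \ref{fs1}, $FS_{X^{-2}}(V_\lambda) = 1$, so $FS_C(V_\lambda) = (-1)^{\langle 2\lambda,\rho^\vee\rangle}$.

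Finally I would identify this value with the classical indicator. The classical Frobenius--Schur indicator of a self-dual simple $\g$-representation of highest weight $\lambda$ is $+1$ in the orthogonal case and $-1$ in the symplectic case, and the standard central-character computation shows this sign equals $(-1)^{\langle 2\lambda,\rho^\vee\rangle}$; as a sanity check, for $\fsl_2$ the $(n+1)$-dimensional irreducible has indicator $(-1)^n$, in agreement with $\langle 2(n\omega),\rho^\vee\rangle = n$. Comparing with the value computed above completes the proof. The main obstacle is this last step: the transformation of the indicator under a change of ribbon element is routine once the defining identity is unwound, but pinning down that the quantum-computed sign $(-1)^{\langle 2\lambda,\rho^\vee\rangle}$ is precisely the classical orthogonal/symplectic dichotomy relies on invoking (or reproving) the classical central-character formula for the Frobenius--Schur indicator.
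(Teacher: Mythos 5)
Your proof is correct, but it takes a genuinely different route from the paper's, whose entire argument is a one-line rigidity statement: with the standard ribbon element $C$ the pivotal element $g_C$ specializes to the identity at $q=1$, the Frobenius--Schur indicator takes values in the discrete set $\{0,\pm1\}$, and a discrete invariant cannot change under continuous deformation, so $FS_C(V)$ must equal its classical value. Your route instead computes $FS_C(V_\lambda)$ on the nose: you transport Lemma \ref{fs1} across the change of ribbon element via the central grouplike element $X^2C$ (in effect proving the relevant special case of Proposition \ref{howFSchanges} before it appears in the paper), obtain $FS_C(V_\lambda)=(-1)^{\langle 2\lambda,\rho^\vee\rangle}$, and then match this sign against the classical orthogonal/symplectic dichotomy. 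The trade-off is real in both directions. Your approach requires importing the classical central-character formula (that a self-dual $V_\lambda$ is orthogonal iff $\langle\lambda,2\rho^\vee\rangle$ is even), a standard but nontrivial external fact that the paper's deformation argument sidesteps entirely; you correctly flag this as the main obstacle. On the other hand, your argument produces an explicit closed formula for $FS_C(V_\lambda)$ that the paper's proof does not, and it is arguably more self-contained in spirit, since the paper's one-liner leaves implicit the verification that the indicator genuinely varies continuously in $q$ (i.e., that the relevant isomorphism $f$ and the pivotal element can be defined over a ring in which specialization at $q=1$ makes sense). Both proofs are valid; just be sure, if you write yours up, to either cite or reprove the classical formula rather than treating it as folklore.
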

\begin{proof}
Since the Frobenius-Schur indicator is a discrete invariant, it doesn't change under continuous deformation.
\end{proof}

\subsection{Which ribbon elements for $U_q(\g)$ arise from half-ribbon elements?} \label{no_half}
Here we classify ribbon elements for $U_q(\g)$. We then prove that the standard ribbon element for $U_q(\fsl_2)$ is not equal to $t^{-2}$ for any half-ribbon element $t$.

\begin{Definition}
Suppose that $\phi$ is a character of $P/Q$, where $P$ is the weight lattice of $\g$, and $Q$ is the root lattice.  Define $s(\phi) \in \widetilde{U_q(\g)}$ to act on $V_\lambda$ as multiplication by the scalar $\phi(\lambda)$. 
\end{Definition}

\begin{Theorem} \label{centrals}
The central grouplike elements of $\widetilde{U_q(\g)}$ are precisely the $s(\phi)$.
\end{Theorem}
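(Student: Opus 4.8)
The plan is to use Theorem \ref{directproduct} to reduce the statement to a claim about scalar-valued functions on dominant weights, and then translate the grouplike condition into a multiplicativity property that forces such a function to be the restriction of a character of $P/Q$.

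First I would identify central elements concretely. By Theorem \ref{directproduct}, $\widetilde{U_q(\g)} = \prod_{\lambda \in \Lambda_+} \End(V_\lambda)$, so an element $z$ is determined by its action on each irreducible $V_\lambda$. If $z$ is central it commutes with the image of $U_q(\g)$ in $\End(V_\lambda)$, so since each $V_\lambda$ is absolutely simple, Schur's lemma forces $z$ to act as a scalar $c(\lambda)$. Thus central elements correspond exactly to functions $c\colon \Lambda_+ \to F$, and central grouplike elements to those $c$ which are nowhere zero (grouplike elements are invertible) and satisfy $\Delta(z) = z \otimes z$ together with $\varepsilon(z) = 1$; the latter reads $c(0) = 1$ since $\varepsilon$ is the action on the trivial representation $V_0$.

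Next I would unwind the grouplike relation on tensor products. The element $z \otimes z$ acts on $V_\lambda \otimes V_\mu$ as the scalar $c(\lambda)c(\mu)$, while $\Delta(z)$, being central, acts on each irreducible constituent $V_\nu \subset V_\lambda \otimes V_\mu$ as $c(\nu)$. Hence $z$ is grouplike precisely when $c(\nu) = c(\lambda)c(\mu)$ for every $\nu$ with $V_\nu \subset V_\lambda \otimes V_\mu$. Taking the Cartan (top) component $\nu = \lambda + \mu$, which always occurs with multiplicity one, yields the monoid homomorphism property $c(\lambda + \mu) = c(\lambda)c(\mu)$ on $\Lambda_+$. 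Since the fundamental weights are dominant, $\Lambda_+$ generates $P$ as a group, so this extends uniquely to a group character $\tilde\phi$ on $P$ with $\tilde\phi|_{\Lambda_+} = c$, the well-definedness being the usual check that $c(\lambda)/c(\mu)$ depends only on $\lambda - \mu$.

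The crux is then to show $\tilde\phi$ is trivial on $Q$, and this is the step I expect to be the main obstacle: I must realize each simple root as a genuine weight gap in some explicit tensor product. I would use $V_\rho \otimes V_\rho$, in which the constituent $V_{2\rho - \alpha_i}$ occurs for every $i$: the weight $2\rho - \alpha_i$ is dominant since $\langle 2\rho - \alpha_i, \alpha_j^\vee\rangle \ge 0$ for all $j$, and a short computation (its weight-multiplicity in the tensor product is $2$, from which the top constituent $V_{2\rho}$ accounts for $1$) shows it appears with positive multiplicity; here the quantum decomposition at generic $q$ matches the classical one, so these multiplicities are the familiar ones. The grouplike relation then gives $\tilde\phi(2\rho - \alpha_i) = c(\rho)^2 = \tilde\phi(2\rho)$, hence $\tilde\phi(\alpha_i) = 1$ for every $i$. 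Therefore $\tilde\phi$ factors through a character $\phi$ of $P/Q$, and by construction $z$ acts on $V_\lambda$ as $\phi(\lambda)$, i.e. $z = s(\phi)$. For the reverse inclusion I would check directly that each $s(\phi)$ is central grouplike: it acts as a scalar on every irreducible, hence is central; every constituent $V_\nu$ of $V_\lambda \otimes V_\mu$ satisfies $\nu \equiv \lambda + \mu \pmod Q$, so $\phi(\nu) = \phi(\lambda)\phi(\mu)$, meaning $s(\phi)$ acts on $V_\lambda \otimes V_\mu$ exactly as $s(\phi) \otimes s(\phi)$; and $\varepsilon(s(\phi)) = \phi(0) = 1$.
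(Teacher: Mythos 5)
Your proof is correct, and it reaches the conclusion by a genuinely different route at the crucial step. Both arguments begin the same way: a central grouplike element acts by a scalar $c(\lambda)$ on each $V_\lambda$, and grouplikeness forces $c(\nu)=c(\lambda)c(\mu)$ for every constituent $V_\nu\subset V_\lambda\otimes V_\mu$. The paper then studies the equivalence relation $\lambda\sim\mu$ generated by co-occurrence in tensor products and proves it coincides with congruence mod $Q$; the hard inclusion ($\lambda\in Q\Rightarrow\lambda\sim 0$) is established analytically, by taking a faithful representation $V$ of the compact adjoint form and using Haar-measure character estimates to show $V_\lambda$ occurs in $V^{\otimes n}$ for $n$ large, while the other inclusion uses central characters of the simply connected group. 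You instead extract multiplicativity $c(\lambda+\mu)=c(\lambda)c(\mu)$ from the Cartan component, extend $c$ to a character $\tilde\phi$ of $P$, and kill $Q$ by the explicit decomposition $V_\rho\otimes V_\rho\supset V_{2\rho}\oplus V_{2\rho-\alpha_i}$ (your multiplicity count there is right: the weight $2\rho-\alpha_i$ has multiplicity $2$ in the tensor product and $1$ in $V_{2\rho}$, and no other dominant $\nu$ with $2\rho-\alpha_i\le\nu\le 2\rho$ exists). This is more elementary and entirely finite/combinatorial, avoiding the compact group, and it proves the reverse inclusion ($s(\phi)$ is grouplike) by the same easy congruence observation the paper uses implicitly. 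What the paper's argument buys in exchange is a sharper auxiliary fact of independent interest: the precise identification of $\sim$ with congruence mod $Q$, valid for an arbitrary faithful representation of the adjoint form rather than the specific tensor square $V_\rho\otimes V_\rho$. One small point worth making explicit in your write-up: the assertion that $\Delta(z)$ acts on each irreducible constituent of $V_\lambda\otimes V_\mu$ by $c(\nu)$ is justified because the action of $z\in\widetilde{U_q(\g)}=\prod_\nu\End(V_\nu)$ on any finite-dimensional module is determined componentwise by its decomposition into irreducibles, and $\Delta(z)$ acts on $V_\lambda\otimes V_\mu$ exactly as $z$ does on that module; with multiplicities present this needs the completion picture of Theorem \ref{directproduct}, not just Schur's lemma.
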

\begin{proof}
Suppose that $s$ is a central grouplike element of $\widetilde{U_q(\g)}$.  Since it is central, $s$ acts by a scalar on any irreducible representation.  Furthermore, since it is grouplike, $s$ acts by the same scalar on any two irreducible representations that appear in the same arbitrary tensor product of irreducible representations.  Thus such an $s$ gives a function of $P/\sim$, where $\sim$ is the equivalence relation on weights generated by $\lambda \sim \mu$ if $V_\lambda$ and $V_\mu$ appear in the same tensor product of irreducible representations. It suffices to show that $\lambda \sim \mu$ if and only if $\lambda+Q=\mu+Q \in P/Q$. 
One can easily see that $\sim$ respects the additive structure of the weight lattice, so it is enough to prove that $V_\lambda$ appears in some tensor product also containing the trivial representation $V_0$ precisely when $\lambda$ is in the root lattice.

Since the Littlewood-Richardson coefficients are the same in the quantum and classical cases, this reduces to the same question in the classical case.  So let $G$ be the simply connected Lie group attached to $U(\g)$, and let $Z(G)$ be the center of $G$.  The central character map gives a pairing between $Z(G)$ and the weight lattice mod the root lattice.  Since any tensor product of irreducible representations has a well-defined central character, we see that $V_\lambda$ is equivalent to $V_0$ only if $\lambda$ is in the root lattice.

To see the other direction, let $V$ be a faithful representation of the compact adjoint form $K$ of $G$.

{\bf Claim:} For any $\lambda$ in the root lattice, there exists $N$ so that, for all $n > N$, $V_\lambda$ occurs as a subrepresentation of $V^{\otimes n}$.

Proof of claim: Since $\lambda$ is in the root lattice, $V_\lambda$ descends to a representation of $K$.  On this compact form we can use character theory.  Because $K$ has no center, $|\chi_V(g)| < \dim V$ for any $g \neq 1$.  Let 
\begin{equation}
k_n = \left|  \int_K \chi_V^n d\mu \right|,
\end{equation}
where $d\mu$ is the normalized Haar measure. Since $K$ is compact and $|\chi_V(g)|<\chi_V(1)$ for all $g \neq 1$, we see that
\begin{equation}
\lim_{n \rightarrow \infty}  \frac{\langle \chi_{V^{\otimes n}}, \chi_{V_\lambda} \rangle}{k_n} = \lim_{n\rightarrow \infty} \frac{1}{k_n} \int_K \chi_V(g)^n \chi_{V_\lambda}(g^{-1}) d\mu = \chi_{V_\lambda}(1) = \dim V_\lambda.
\end{equation} 
In particular, for $n \geq N$ we see that $\langle \chi_{V^{\otimes n}}, \chi_{V_\lambda} \rangle$ is nonzero, so $V_\lambda$ occurs in $V^{\otimes n}$.  

Applying the above fact to $0$ and an arbitrary root vector $\lambda$, we see that for a sufficiently high power $N' = \text{max}(N_0, N_\lambda)$ the tensor power $V^{\otimes N}$ contains both $V_0$ and $V_\lambda$, and hence $0 \sim \lambda$.
\end{proof}

\begin{Theorem} \label{all_ribbons}
Ribbon elements of $\widetilde{U_q(\g)}$ are exactly elements of the form $s(\phi)X^{-2}$ where $\phi$ is a character of the weight lattice mod the root lattice of order $\leq 2$.
\end{Theorem}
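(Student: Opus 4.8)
The plan is to assemble the theorem directly from three earlier results: Theorem \ref{ishalf}, Lemma \ref{other_ribbons}, and Theorem \ref{centrals}. First I would invoke Theorem \ref{ishalf}, which guarantees that $X^{-2}$ is a ribbon element for the quasi-triangular structure $(U_q(\g), R)$ with $R = (X^{-1} \otimes X^{-1}) \Delta(X)$ the standard $R$-matrix. This pins down one explicit ribbon element to serve as a basepoint for the whole family.

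Next I would apply Lemma \ref{other_ribbons} with $v = X^{-2}$. That lemma says that every ribbon element of $(\Hopf, R)$ has the form $vs$ for a central grouplike element $s$ with $s^2 = 1$, and conversely that each such product is again a ribbon element. Hence the ribbon elements of $\widetilde{U_q(\g)}$ are exactly the elements $X^{-2} s$ with $s$ central, grouplike, and $s^2 = 1$.

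Then I would use Theorem \ref{centrals} to identify the relevant $s$: the central grouplike elements of $\widetilde{U_q(\g)}$ are precisely the $s(\phi)$ for characters $\phi$ of $P/Q$. It remains only to translate the condition $s(\phi)^2 = 1$. Since $s(\phi)$ acts on $V_\lambda$ by the scalar $\phi(\lambda)$, its square acts by $\phi(\lambda)^2 = (\phi^2)(\lambda)$, and this equals $1$ on every $V_\lambda$ if and only if $\phi^2$ is the trivial character, i.e.\ $\phi$ has order $\leq 2$ in the character group of $P/Q$. Combining these three facts shows that the ribbon elements are exactly the $s(\phi) X^{-2}$ with $\phi$ of order $\leq 2$, which is the claim.

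The substantive work is entirely contained in the cited results (most notably the equivalence-relation argument of Theorem \ref{centrals}), so the present theorem is a synthesis rather than a new difficulty; the only point needing care is the order-$\leq 2$ translation above. It may be worth remarking that the standard ribbon element $C$ is simply another member of this family: by Lemma \ref{Xprops} part (\ref{X2prop}) and Definition \ref{defC}, $X^{-2} = s(\phi_0) C$ where $\phi_0(\lambda) = (-1)^{\langle 2\lambda, \rho^\vee \rangle}$ is itself an order-$\leq 2$ character, so anchoring the classification at $X^{-2}$ rather than at $C$ only shifts the distinguished character and leaves the set of ribbon elements unchanged.
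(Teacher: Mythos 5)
Your proof is correct and follows exactly the route the paper takes: the paper's own proof is the one-line statement that the theorem ``follows from Lemma \ref{other_ribbons} and Theorem \ref{centrals},'' with the basepoint ribbon element $X^{-2}$ supplied by Theorem \ref{ishalf}. You have simply filled in the details (including the order-$\leq 2$ translation and the relation $X^{-2}=s(\phi_0)C$), all of which check out against Lemma \ref{Xprops} and Definition \ref{defC}.
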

\begin{proof}
This follows from Lemma \ref{other_ribbons} and Theorem \ref{centrals}.
\end{proof}

We have the following relationship between ribbon elements and the Frobenius-Schur indicator.

\begin{Proposition} \label{howFSchanges}
If $V_\lambda$ is self dual, then $FS_{s(\phi) X^{-2}}(V_\lambda) = \phi(\lambda)$.  
\end{Proposition}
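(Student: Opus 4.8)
The plan is to compare the two pivotal structures attached to the ribbon elements $X^{-2}$ and $s(\phi)X^{-2}$, and then feed that comparison into the definition of the Frobenius-Schur indicator, using Lemma \ref{fs1} as the base case. Recall that the pivotal structure of a ribbon Hopf algebra is determined by the grouplike element $g = v^{-1}u$, via $p_V(w)(f) = f(gw)$, and that the indicator is defined by $f = FS_p(V)\, f^* \circ p$ for any isomorphism $f : V_\lambda \to V_\lambda^*$.

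First I would track how $g$ changes when the ribbon element is rescaled. The crucial observation is that $u = \mu(S \otimes \id)R_{21}$ depends only on $R$, not on the choice of ribbon element; so passing from $X^{-2}$ to $s(\phi)X^{-2}$ leaves $u$ fixed and replaces the grouplike element $g = (X^{-2})^{-1}u = X^2 u$ by $g' = (s(\phi)X^{-2})^{-1}u = s(\phi)^{-1}g$ (all factors here are central, hence commute). Since $s(\phi)$ acts on the self-dual irreducible $V_\lambda$ by the scalar $\phi(\lambda)$, the element $g'$ acts on $V_\lambda$ as $\phi(\lambda)^{-1}g$. Translating into pivotal structures, if $p$ and $p'$ denote the ones attached to $g$ and $g'$, then the defining formula gives $p'_{V_\lambda}(w)(f) = f(g'w) = \phi(\lambda)^{-1} f(gw) = \phi(\lambda)^{-1} p_{V_\lambda}(w)(f)$, i.e. $p'_{V_\lambda} = \phi(\lambda)^{-1} p_{V_\lambda}$ as maps $V_\lambda \to V_\lambda^{**}$.

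Finally I would fix an isomorphism $f : V_\lambda \to V_\lambda^*$ and combine the two facts. Lemma \ref{fs1} states that $FS_{X^{-2}}(V_\lambda) = 1$, i.e. $f = f^* \circ p_{V_\lambda}$. Using $f^* \circ p'_{V_\lambda} = \phi(\lambda)^{-1}\, f^* \circ p_{V_\lambda} = \phi(\lambda)^{-1} f$ and the definition $f = FS_{s(\phi)X^{-2}}(V_\lambda)\, f^* \circ p'_{V_\lambda}$, Schur's lemma forces $FS_{s(\phi)X^{-2}}(V_\lambda)\,\phi(\lambda)^{-1} = 1$, whence $FS_{s(\phi)X^{-2}}(V_\lambda) = \phi(\lambda)$.

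There is no serious obstacle here; the proof is essentially bookkeeping. The two points that require a little care are the remark that $u$ (and hence the dependence of $g$ on the ribbon element) is governed solely by $R$, and keeping straight whether the final scalar is $\phi(\lambda)$ or $\phi(\lambda)^{-1}$. I expect the direct computation above to yield $\phi(\lambda)$; note also that this distinction is in any case harmless, since for a self-dual $V_\lambda$ one has $2\lambda \in Q$, so $\phi(\lambda) = \pm 1$ and $\phi(\lambda)^{-1} = \phi(\lambda)$.
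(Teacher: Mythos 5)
Your proof is correct and is essentially the paper's own (much terser) argument spelled out: the only ingredient of the Frobenius--Schur indicator that depends on the ribbon element is the pivotal structure, which on $V_\lambda$ gets rescaled by the scalar action of $s(\phi)^{-1}$, and Lemma \ref{fs1} then gives the result. The paper states the rescaling factor as $\phi(\lambda)$ where you compute $\phi(\lambda)^{-1}$, but as you correctly observe these coincide because $2\lambda\in Q$ for self-dual $V_\lambda$, so there is no discrepancy.
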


\begin{proof}
The only part of the definition of the Frobenious Schur-indicator that changes when you change the ribbon element from $X^{-2}$ to $s(\phi) X^{-2}$ is $p_{V_\lambda}$, and this is multiplied by $ \phi(\lambda)$. Hence the proposition follows from theorem \ref{fs1}. 
\end{proof}

We wish to understand which of these ribbon element extend to a  half-ribbon elements on $\widetilde{U_q(\g)}$.  As we shall see, the ratio of two half-ribbon elements is grouplike, so to classify all of them we need to understand the grouplike elements in $\widetilde{U_q(\g)}$. In general this seems to pose some technical challenges, but we can do the case of $U_q(\fsl_2)$.

\begin{Proposition} \label{twots}
Let $(\Hopf, R)$ be a quasi-triangular Hopf algebra, and assume that $t_1$ and $t_2$ are two half-ribbon elements. Then $t_1 t_2^{-1}$ is grouplike. (Note that we do not assume here that $t_1^{-2}=t_2^{-2}$). 
\end{Proposition}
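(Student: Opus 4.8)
The plan is to show directly that $g := t_1 t_2^{-1}$ satisfies $\Delta(g) = g \otimes g$ and $\varepsilon(g) = 1$; its invertibility is immediate since $t_1$ and $t_2$ are invertible. The entire argument is a short computation whose only real input is the hypothesis that $t_1$ and $t_2$ induce the \emph{same} universal $R$-matrix.

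First I would write the defining relations for the two half-ribbon elements and equate them:
\[
(t_1^{-1} \otimes t_1^{-1}) \Delta(t_1) = R = (t_2^{-1} \otimes t_2^{-1}) \Delta(t_2).
\]
Multiplying on the left by $t_2 \otimes t_2$ and observing that $t_2 t_1^{-1} = g^{-1}$, this rearranges to
\[
\Delta(t_2) = (g^{-1} \otimes g^{-1}) \Delta(t_1).
\]

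Next, using that $\Delta$ is an algebra homomorphism (so $\Delta(t_2^{-1}) = \Delta(t_2)^{-1}$), I would compute
\[
\Delta(g) = \Delta(t_1)\, \Delta(t_2)^{-1} = \Delta(t_1)\, \Delta(t_1)^{-1} (g \otimes g) = g \otimes g,
\]
where the middle equality substitutes the rearranged relation and uses $(g^{-1}\otimes g^{-1})^{-1} = g \otimes g$. This is the crux of the proof. Finally, $\varepsilon(g) = \varepsilon(t_1)\,\varepsilon(t_2)^{-1} = 1$, since $\varepsilon$ is an algebra homomorphism and $\varepsilon(t_i) = 1$ for any half-ribbon element by Proposition \ref{hrprops} part (\ref{eproperty}).

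There is no genuine obstacle here; the only points needing care are the bookkeeping facts that $\Delta$ and $\varepsilon$ respect inverses (being algebra homomorphisms, applied to the invertible $t_i$). It is worth emphasizing, as the statement does, that at no point do we invoke $t_1^{-2} = t_2^{-2}$: equality of the associated $R$-matrices alone already forces $t_1 t_2^{-1}$ to be grouplike.
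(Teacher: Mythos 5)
Your proof is correct and follows essentially the same route as the paper, whose entire proof is the one-line observation that $(t_1^{-1} \otimes t_1^{-1})\Delta(t_1) = (t_2^{-1} \otimes t_2^{-1})\Delta(t_2)$; your computation is exactly the intended expansion of that remark. The rearrangement to $\Delta(t_2) = (g^{-1}\otimes g^{-1})\Delta(t_1)$ and the subsequent cancellation are both correct, so there is nothing to add.
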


\begin{proof}
Follows because $(t_1^{-1} \otimes t_1^{-1})  \Delta(t_1)= (t_2^{-1} \otimes t_2^{-1})  \Delta(t_2)$.
\end{proof}

\begin{Lemma} \label{findzero}
Let $V$ be the standard representation of $U_q(\fsl_2)$, with basis $\{ v_+, v_- \}$ such that $E(v_-)= v_+$, $Kv_+=qv_+$, and $Kv_-=q^{-1}v_-$. Then $V \otimes V$ contains a copy of the trivial representation $V_0$. Furthermore, this is spanned by the element
$$v_+ \otimes v_- -q^{-1}v_-\otimes v_+. $$
\end{Lemma}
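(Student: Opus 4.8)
The plan is to verify directly that the proposed vector $w := v_+ \otimes v_- - q^{-1} v_- \otimes v_+$ is fixed by $K$ and annihilated by both $E$ and $F$; any vector with these three properties spans a one-dimensional subrepresentation on which the generators act exactly as on the trivial representation $V_0$, which is what we want.

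First I would record the action of $U_q(\fsl_2)$ on $V$. The hypotheses fix $Kv_+ = qv_+$, $Kv_- = q^{-1}v_-$ and $E(v_-) = v_+$. Since $E$ raises the $K$-eigenvalue by a factor of $q^2$ and $V$ carries only the eigenvalues $q^{\pm 1}$, we must have $E(v_+) = 0$ and $F(v_-) = 0$, while $F(v_+)$ is some scalar multiple of $v_-$. As will be clear below, the final cancellation for $F$ does not depend on this scalar, so I need not invoke the precise normalization coming from the relation $[E,F] = (K - K^{-1})/(q-q^{-1})$.

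Next, using the coproduct formulas $\Delta(E) = E \otimes K + 1 \otimes E$, $\Delta(F) = F \otimes 1 + K^{-1} \otimes F$, and $\Delta(K) = K \otimes K$ from \eqref{coproduct}, I would compute the action of each generator on the two weight-zero vectors $v_+ \otimes v_-$ and $v_- \otimes v_+$. For $K$ both are fixed, so $Kw = w$. For $E$ one gets $E(v_+ \otimes v_-) = v_+ \otimes v_+$ and $E(v_- \otimes v_+) = q\,(v_+ \otimes v_+)$, so $E(w) = (1 - q^{-1}q)\,(v_+ \otimes v_+) = 0$. For $F$, writing $F(v_+) = c\,v_-$, one gets $F(v_+ \otimes v_-) = c\,(v_- \otimes v_-)$ and $F(v_- \otimes v_+) = cq\,(v_- \otimes v_-)$, so $F(w) = (c - q^{-1}cq)\,(v_- \otimes v_-) = 0$ for any $c$.

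There is no real obstacle here: the argument is a short bookkeeping computation, and the only point demanding care is tracking the $K$- and $K^{-1}$-twists in the coproduct, which is exactly what forces the asymmetric coefficient $q^{-1}$ (rather than $1$) in $w$. Having established $Kw = w$ and $Ew = Fw = 0$, the line $\mathbb{C}(q)\,w$ is a subrepresentation isomorphic to $V_0$, proving the lemma.
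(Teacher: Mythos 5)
Your proof is correct and takes essentially the same approach as the paper: a direct computation with the coproduct formulas. The paper's proof is terser, checking only that $\Delta(E)$ annihilates the vector (it being a weight-zero vector, this already forces it to span a trivial subrepresentation), whereas you also verify the $K$- and $F$-actions explicitly; both computations are fine.
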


\begin{proof}
It suffices to check that $\Delta(E)(v_+ \otimes v_- -q^{-1}v_-\otimes v_+)=0.$ This follows from the definition of $\Delta$. 
\end{proof}

\begin{Definition}
Let $a \in \bc(q)$. Define $K_a$ to the the element of $\widetilde{U_q(\fsl_2)}$ which acts on any weight vector $v \in V_\lambda$ by $K_a(v)= a^{\wt(v)}$. Note that $K_a$ is grouplike. 
\end{Definition}

\begin{Lemma} \label{grouplikea}
All grouplike elements in $\widetilde{U_q(\mathfrak{sl}_2})$ are of the form $K_a$ for some $a \in \bc(q)$.
\end{Lemma}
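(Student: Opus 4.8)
The plan is to exploit the description of $\widetilde{U_q(\fsl_2)}$ as the direct product $\prod_n \End(V_n)$ from Theorem \ref{directproduct}: specifying a grouplike element $s$ amounts to specifying its action on each irreducible $V_n$, and grouplikeness ($\Delta(s)=s\otimes s$, $s$ invertible) means that $s$ acts on any tensor product of representations as the corresponding tensor power of its action. The strategy is therefore to pin down the action of $s$ on the standard two-dimensional representation $V$ (with basis $v_+,v_-$ as in Lemma \ref{findzero}), show it must coincide with $K_a$ for a suitable $a$, and then propagate this to every $V_n$ using the fact that $V_n$ occurs inside $V^{\otimes n}$.

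First I would analyze the action on $V$. Write $A=\pi_V(s)$, say $A v_+ = a v_+ + c v_-$ and $A v_- = b v_+ + d v_-$. Since $s$ is grouplike, applying $(\varepsilon\otimes\id)$ to $\Delta(s)=s\otimes s$ gives $\varepsilon(s)=1$, so $s$ acts by the scalar $1$ on any copy of the trivial representation; in particular it fixes the trivial subrepresentation of $V\otimes V$, which by Lemma \ref{findzero} is spanned by $w=v_+\otimes v_- - q^{-1} v_-\otimes v_+$. Because $s$ acts on $V\otimes V$ as $A\otimes A$, this gives the single vector equation $(A\otimes A)w=w$. Expanding $(A\otimes A)w$ in the basis $\{v_\pm\otimes v_\pm\}$ and equating coefficients yields
\begin{align}
ab(1-q^{-1}) &= 0, & ad - q^{-1}bc &= 1, \notag \\
cb - q^{-1}ad &= -q^{-1}, & cd(1-q^{-1}) &= 0. \notag
\end{align}
Since $q\neq 1$ we get $ab=cd=0$. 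If $b\neq 0$ then $a=0$, and the two middle equations force $bc=-q$ and $bc=-q^{-1}$, hence $q^2=1$, a contradiction; so $b=0$. Then $ad=1$, and $cd=0$ with $d=a^{-1}\neq 0$ forces $c=0$. Thus $A$ is diagonal with eigenvalues $a$ on $v_+$ and $a^{-1}$ on $v_-$, which is exactly $\pi_V(K_a)$.

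Finally I would propagate this equality. The vector $v_+^{\otimes n}$ is a highest weight vector of weight $n$ in $V^{\otimes n}$, so $V_n$ occurs as a subrepresentation (a summand) of $V^{\otimes n}$. Both $s$ and $K_a$ are grouplike and agree on $V$, so they act identically on $V^{\otimes n}$ (each acts as $A^{\otimes n}$, via $\Delta^{n}(s)=s^{\otimes n}$ and the same for $K_a$), hence identically on the summand $V_n$. As this holds for every $n\ge 0$, the elements $s$ and $K_a$ agree in $\prod_n \End(V_n)=\widetilde{U_q(\fsl_2)}$, giving $s=K_a$. I expect the only genuine content to be the off-diagonal vanishing in the displayed system, where the hypothesis $q^2\neq 1$ is essential; the reduction to the standard representation and the tensor-power propagation are routine once the grouplike condition is used to force $\varepsilon(s)=1$ and the tensor-diagonal action.
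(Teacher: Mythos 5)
Your proposal is correct and follows essentially the same route as the paper: both restrict a grouplike $\sigma$ to the standard representation, use the fact that it must fix the trivial subrepresentation of $V\otimes V$ spanned by $v_+\otimes v_- - q^{-1}v_-\otimes v_+$ (Lemma \ref{findzero}) to force the matrix to be diagonal with determinant $1$, and then conclude $\sigma=K_a$ because $V$ tensor-generates the category. The only differences are cosmetic (a transposed labeling of the matrix entries and a slightly more explicit derivation that the off-diagonal terms vanish).
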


\begin{proof}
A grouplike element is determined by its action on the fundamental representation $V$.  Suppose that $\sigma$ is grouplike. Define constants $a,b,c$ and $d$ by $\sigma v_+ = av_+ +bv_-$ and $\sigma v_-= cv_+ + dv_-$.  Since $\sigma$ is grouplike it must act trivially on the trivial subrepresentation of $V\otimes V$.  Thus by Lemma \ref{findzero},
\begin{align}
v_+ \otimes v_- -q^{-1}v_-\otimes v_+ &= \sigma\otimes \sigma (v_+ \otimes v_- -q^{-1}v_-\otimes v_+) 
\\ &= ac(1-q^{-1})v_+ \otimes v_+ + (ad-q^{-1}bc)v_+ \otimes v_-
\\& -q^{-1}(ad-qbc)v_-\otimes v_+ + (1-q^{-1})bd v_- \otimes v_-.
\end{align}
Comparing coefficients we see that $ac=bd=bc=0$ and $ad=1$.  Hence $\sigma$ acts on $V$ in exactly the same way that $K_a$ does. Since they are both grouplike and $V$ is a tensor generator for the category of representations of $U_q(\fsl_2)$, we see that $\sigma = K_a$.
\end{proof}

\begin{Theorem} \label{nonono}
There is no topological half-ribbon element $t$ for $U_q(\fsl_2)$ such that $t^{-2}$ is the usual ribbon element $C$.
\end{Theorem}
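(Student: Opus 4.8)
The plan is to argue by contradiction, comparing a hypothetical $t$ against the half-ribbon element $X$ produced by Theorem \ref{ishalf}, and to show that the two can only differ by a grouplike element that is harmless enough to force $t^{-2}=X^{-2}$, which is \emph{not} $C$.

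First I would suppose that $t$ is a topological half-ribbon element for $U_q(\fsl_2)$ with $t^{-2}=C$. A half-ribbon element presenting a ribbon element must reproduce the standard $R$-matrix $R=(X^{-1}\otimes X^{-1})\Delta(X)$ of Theorem \ref{sR}, so $t$ and $X$ are both half-ribbon elements for the one quasi-triangular Hopf algebra $(U_q(\fsl_2),R)$. Proposition \ref{twots} then applies with $t_1=t$, $t_2=X$ and shows $tX^{-1}$ is grouplike. Lemma \ref{grouplikea} pins it down: $tX^{-1}=K_a$ for some $a\in\bc(q)$, i.e. $t=K_aX$.

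Next I would compute $t^2$. For $\fsl_2$ the diagram automorphism $\theta$ is trivial, so Lemma \ref{Xprops}(iv) gives $C_X(K)=K^{-1}$; intrinsically, $X$ interchanges the weight-$\mu$ and weight-$(-\mu)$ spaces because $\wt(T_{w_0}v)=w_0\wt(v)=-\wt(v)$, so conjugation by $X$ carries $K_a$ to $K_a^{-1}$. Hence $XK_a=K_a^{-1}X$, and
\[
t^2=K_aXK_aX=K_aK_a^{-1}X^2=X^2 .
\]
Thus $t^{-2}=X^{-2}$, so it suffices to note $X^{-2}\neq C$: by Theorem \ref{ishalf} the element $X^2C$ acts on the standard representation $V$ (highest weight $\omega$, with $\langle 2\omega,\rho^\vee\rangle=1$) as $(-1)^1=-1\neq 1$, the desired contradiction. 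Equivalently one may invoke $FS_{X^{-2}}(V)=1$ from Lemma \ref{fs1} against $FS_C(V)=-1$ from Lemma \ref{likenormal}, since the two-dimensional representation of $\fsl_2$ is symplectic.

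The only genuinely delicate point is the opening reduction. One must be certain that a half-ribbon element with $t^{-2}=C$ necessarily carries the \emph{standard} $R$-matrix, so that Proposition \ref{twots} can be run against $X$; this is precisely the meaning of a ribbon element "arising from a half-ribbon element'' in this section (namely that $R=(t^{-1}\otimes t^{-1})\Delta(t)$ is the standard $R$-matrix and $v=t^{-2}$). Once that is granted, the whole argument collapses to the grouplike computation above, and it in fact proves the sharper fact that \emph{every} half-ribbon element of $U_q(\fsl_2)$ has the form $K_aX$ and yields the single ribbon element $X^{-2}$, so $C$ can never occur.
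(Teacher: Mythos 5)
Your proof is correct and follows essentially the same route as the paper: both compare a hypothetical $t$ to $X$ via Proposition \ref{twots} and Lemma \ref{grouplikea} to write $t=K_aX$, use the fact that conjugation by $X$ inverts $K_a$ (since $X$ swaps the $\mu$ and $-\mu$ weight spaces) to conclude $t^{-2}=X^{-2}$, and finish by noting $X^{-2}$ acts as $-C$ on the standard representation. Your explicit flagging of the convention that a half-ribbon element for $U_q(\fsl_2)$ carries the standard $R$-matrix is a reasonable clarification of something the paper leaves implicit, but it does not change the argument.
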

\begin{proof}
By Theorem \ref{normal_ribbon}, $X$ is a half-ribbon element for $(U_q(\fsl_2), R)$. By Lemma \ref{twots}, any other half-ribbon element is of the form $X \alpha$ for some grouplike element $\alpha$. By Lemma \ref{grouplikea}, $\alpha=K_a$ for some $a \in \bc(q)$. Since $X$ sends the $\lambda$ weight space to the $-\lambda$ weight , it follows that $K_a X K_a v = X v$ for any weight space $v$.  Therefore, we have that $(X K_a)^{-2} = X^{-2}$. Hence every half-ribbon element $t$ for $U_q(\fsl_2)$ has $t^{-2}= X^{-2}$. But $X^{-2}$ acts as $-C$ on the standard representation, so is not equal to $C$.
\end{proof}

\subsection{Summary of ribbon and half-ribbon elements for $U_q(\fsl_2)$, $U_q(\fsl_3)$ and $U_q(\fsl_4)$}

We now describe the possible ribbon elements of these three quantum groups, and describe which are the squares of half-ribbon elements. Notice that the three cases all behave differently.

$\bullet$ $U_q(\fsl_2)$. There are two ribbon elements $C$, and $X^{-2}$. Only $X^{-2}$ can be realized using a half-ribbon element (see Theorem \ref{nonono}).

$\bullet$ $U_q(\mathfrak{sl}_3)$. In this case $P/Q \cong \mathbb{Z}/3$.  Thus, there are no central grouplikes of order two, so there can be only one ribbon element. This unique ribbon element can be realized using the half-ribbon element $X$.

$\bullet$ $U_q(\mathfrak{sl}_4)$. In this case $P/Q \cong \mathbb{Z}/4$. This has two characters of order $\leq 2,$ so there are two ribbon elements. In this case both come from half-ribbon elements. A straightforward calculation on the standard representation of $U_q(\fsl_4)$ shows that $X^{-2}$ is not the standard ribbon element. To see the other half-ribbon element, consider the central element $s \in \widetilde{U_q(\fsl_4)}$ which acts on $V_\lambda$ as multiplication by $i^{4(\lambda, \lambda)}$, where $i$ is the complex  fourth root of unity. Recall that $(\lambda, \alpha) \in \bz$ for any root $\alpha$, which implies that $s$ is grouplike. Clearly we also have $s^4=1$, and from these two facts it follows that $X':=sX$ is a half-ribbon element for $(U_q(\fsl_4), R)$. One can check that $s$ acts as mulriplicatin by $-i$ on the standard representation, so $X'^{-2} = s^2 X^{-2} \neq X^{-2}$. Since $X^{-2}$ is the non-standard ribbon element, $X'^{-2}$ must be the standard ribbon element. 

\subsection{Varying the ribbon element and diagrammatic categories} \label{dc_section}

Often times you'll find a result in the literature saying that a certain diagram category ``is the same as" as certain category coming from quantum groups (for example, \cite[Appendix H]{Ohtsuki} \cite[p. 11]{Kuperberg}).  However, in the details of this claim there's an annoying sign difference between the diagram category and the quantum group category.  The reason for this is that, although the two categories match up as braided tensor categories, they are different as pivotal categories. This can be fixed by changing the ribbon element on the quantum group side, thus changing the pivotal structure. 

We illustrate this here by considering the case of $U_q(\mathfrak{sl}_2)$-rep and the Temperley-Lieb category (see Section \ref{TLdef}). In the correspondence discussed in \cite[Appendix H]{Ohtsuki}, the standard representation of $U_q(\fsl_2)$ corresponds to the elementary object (i.e. a single $\bullet$) in the Temperley-Lieb  category. We now use the Frobenius-Schur indicator to show that, for any such statement to hold on the level of pivotal categories, one must use the ribbon element $X^{-2}$

\begin{Proposition}
Let $\bullet$ denote a single point in Temperley-Lieb (with the trivial pivotal structure discussed in Section \ref{TLdef}). Then $FS(\bullet) = 1$.
\end{Proposition}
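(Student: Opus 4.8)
The plan is to compute $FS(\bullet)$ straight from the definition, exploiting that in $TL$ the dual $\bullet^*$ is literally $\bullet$ and the pivotal structure is trivial. First I would record the two facts the definition presupposes. The object $\bullet$ is absolutely simple: the only planar arc diagram with a single boundary point on the bottom and a single one on the top is one strand, so $\End(\bullet)$ is one-dimensional, spanned by $\id_\bullet$. And $\bullet$ is self-dual: since the dual is given by $180$-degree rotation and a single point rotates to a single point, one has $\bullet^* = \bullet$, with the cap and cup arcs serving as evaluation and coevaluation.

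Next I would unwind $FS_p(\bullet)$. Because $\bullet \cong \bullet^*$, the definition asks for an isomorphism $f \colon \bullet \to \bullet^*$; as the indicator is independent of this choice, I may take $f = \id_\bullet$ under the identification $\bullet^* = \bullet$. Then $FS(\bullet)$ is the scalar satisfying $f = FS(\bullet)\, f^* \circ p_\bullet$. Two observations finish it: the trivial pivotal structure means $p_\bullet = \id_\bullet$ under $\bullet^{**} = \bullet$, and the transpose of the identity is the identity, so $f^* = \id_\bullet$ as well. Hence $f^* \circ p_\bullet = \id_\bullet = f$, which forces $FS(\bullet) = 1$.

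The only step needing care, and the one I would spell out, is the claim $f^* = \id_\bullet$, i.e.\ that forming the transpose produces no sign. Diagrammatically, $f^*$ is obtained by bending the single strand of $f$ around the cup and the cap, and this is just an isotopy of one uncrossed, unframed arc back to itself. Since $TL$ is planar and unoriented there are no crossings or framing corrections available to introduce a sign. This is precisely the feature that separates $TL$ with its trivial pivotal structure from the symplectic self-duality of the standard $U_q(\fsl_2)$-module under the usual ribbon element, and it is consistent with $FS_{X^{-2}}(V_\lambda) = 1$ from Lemma \ref{fs1}. I anticipate no real obstacle beyond making this rotation argument precise.
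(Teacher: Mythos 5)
Your argument is correct and matches the paper's proof: both take $f$ to be the single strand (the identity under $\bullet^* = \bullet$), observe that its dual, obtained by rotating/bending the strand, is again the single strand by planar isotopy, and conclude $FS(\bullet)=1$ since the pivotal structure is trivial. Your write-up just spells out in more detail the same one-line rotation argument the paper gives.
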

\begin{proof}
The single strand is an isomorphism between $\bullet$ and $\bullet^*=\bullet$.  The dual of the single strand (given by rotating 180 degrees about the z-axis) is again the single strand.
\end{proof}

\begin{Proposition}
Let $V$ be the standard representation of $U_q(\fsl_2)$. Then
\begin{enumerate}
\item $\text{FS}_C(V)=-1$.
\item $\text{FS}_{X^{-2}}(V)=1$.
\end{enumerate}
\end{Proposition}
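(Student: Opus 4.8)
The plan is to derive both parts directly from the two Frobenius--Schur lemmas already established, rather than computing the indicator from scratch. The key preliminary observation is that the standard representation $V$ of $U_q(\fsl_2)$ is self-dual; indeed \emph{every} finite-dimensional representation of $U_q(\fsl_2)$ is self-dual, since $-w_0 = \mathrm{id}$ on the weight lattice and hence $V_\lambda^* \cong V_{-w_0\lambda} = V_\lambda$. In particular $V \cong V^*$, so for either pivotal structure the indicator $FS_p(V)$ is a genuine sign $\pm 1$ and both statements are meaningful.

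For part (ii) I would simply invoke Lemma \ref{fs1}: since $V$ is a self-dual irreducible representation of $U_q(\g)$ for $\g = \fsl_2$, that lemma immediately gives $FS_{X^{-2}}(V) = 1$ with no further work.

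For part (i) I would use Lemma \ref{likenormal}, which identifies $FS_C(V)$ with the classical Frobenius--Schur indicator of the corresponding representation of $\fsl_2$. That representation is the defining $2$-dimensional representation of $SL_2$, which carries an invariant antisymmetric (symplectic) bilinear form and no invariant symmetric one; its classical indicator is therefore $-1$, and hence $FS_C(V) = -1$. As a self-contained alternative, one can verify part (i) using Lemma \ref{findzero}: the invariant vector $v_+ \otimes v_- - q^{-1} v_- \otimes v_+$ spanning the copy of $V_0$ in $V \otimes V$ provides an explicit isomorphism $f \colon V \to V^*$, and one computes $f^* \circ p_V$ with the pivotal structure $p_V$ coming from the grouplike element $g = C^{-1}u$, reading off the sign exactly as in the proof of Lemma \ref{fs1} but with the ribbon element $C$ in place of $X^{-2}$.

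I expect there to be essentially no hard step here: the whole content is packaged in the preceding lemmas, and the only thing requiring care is bookkeeping of the sign. This is reassuringly consistent with Proposition \ref{howFSchanges} together with the fact, recorded in the proof of Theorem \ref{nonono}, that $X^{-2}$ acts as $-C$ on the standard representation: the two indicators differ precisely by the sign by which the corresponding central grouplike element acts on $V$, which is $-1$. Thus the mild obstacle is keeping the pivotal-structure-dependence of $FS$ straight when passing between the ribbon elements $X^{-2}$ and $C$.
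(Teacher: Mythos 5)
Your proof is correct, and it assembles the same circle of results the paper leans on: the paper's own proof is a one-line pointer to ``a straightforward calculation using Definition \ref{defC} and Lemma \ref{Xprops}(\ref{X2prop})'' together with ``the discussion in Section \ref{no_half},'' i.e.\ essentially the route you describe as your cross-check ($FS_{X^{-2}}(V)=1$ from Lemma \ref{fs1}, then $C=s(\phi)X^{-2}$ with $\phi$ acting by $(-1)^{\langle 2\lambda,\rho^\vee\rangle}=-1$ on the standard representation, so Proposition \ref{howFSchanges} flips the sign). The one place you genuinely diverge is part (i), where your primary argument goes through Lemma \ref{likenormal} and the classical symplectic form on the defining representation of $SL_2$; the paper does not invoke that lemma here, but it is available, correct, and arguably the cleanest way to see the sign without touching $g$ or the pivotal structure explicitly. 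What the classical-deformation route buys is avoiding any quantum computation; what the paper's route buys is staying entirely inside the quantum-group formalism and making transparent that the two indicators differ exactly by the eigenvalue of the central grouplike $X^2C$, which is the point being illustrated in Section \ref{dc_section}. Your observation that $V$ (indeed every $V_\lambda$) is self-dual for $\fsl_2$ is a worthwhile preliminary that the paper leaves implicit.
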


\begin{proof}
This is a straightforward calculation using the definitions of $C$ (Definition \ref{defC}) and Lemma \ref{Xprops} part (\ref{X2prop}). It also follows from the discussion in Section \ref{no_half}.
\end{proof}

If $\bullet$ in Temperley-Lieb is going to correspond to the standard representation, the Frobenius-Schur indicators should agree. Clearly this can only happen if we use the ribbon element $X^{-2}$. 

\begin{Comment}
A similar difficulty arises in relating the type $C$ quantum group knot invariants with (a specialization of) the Kauffman polynomial. Once again, one can fix the problem by switching to the ribbon element $X^{-2}$. \end{Comment}

\begin{Comment} This difficulty can also be addressed by changing the diagrammatic category.  The Temperley-Lieb category has another pivotal structure $p$ such that $p_\bullet = -\id_\bullet$.  With this pivotal structure $FS(\bullet)=-1$.  This other pivotal structure is given diagrammatically by the disoriented Temperley-Lieb category of \cite{CMW}.
\end{Comment}

\subsection{The effect of varying the ribbon element on knot invariants}
 
Suppose $(\Hopf, R,v)$ is a ribbon Hopf algebra over a base field $F$.  Fix a simple $\Hopf$-module $V$. The functor from Theorem \ref{ribbonfunctor} sends a link with every component labeled with $V$ to an element of $F$, denoted by $\cF^V(L)$, which is an invariant of framed oriented links. Note that $\cF^V(L)$ actually depends on the choice of ribbon element $v$, so when we need to be clear about which ribbon element we are using we will denote it by $\cF^V_{v}(L)$. Since $V$ is simple the ribbon element acts by a scalar $\theta_v(V)$ on $V$. It follows as in, for example, \cite[\S 3.3]{Ohtsuki} that $\theta_v(V)^{w(L)} \cF_v^V(L)$ is an invariant of oriented but unframed links (where $w(L)$ is the writhe of the link diagram, as described in, for example, \cite[p. 11]{Ohtsuki} \cite{Wikipedia}).

\begin{Proposition} \label{invariantnewribbon}
Suppose that $\Hopf$ is a quasi-triangular Hopf algebra with $\End(\mathbf{1}) = F$ and that $V$ is a simple $\Hopf$-module.  Suppose that $v_1$ and $v_2$ are two different ribbon elements for $\Hopf$.  Let $\theta_{v_1}(V)$ and $\theta_{v_2}(V)$ be the scalars by which $v_1$ and $v_2$ act on $V$.  Let $\cF_{v_1}^V$ and $\cF^V_{v_2}$ be the functors attached to these two ribbon Hopf algebras.
Then for any link $L$, 
\begin{equation*}
\theta_{v_1}^{w(L)}\cF^V_{v_1}(L) = \left( \frac{FS_{v_1}(V)}{FS_{v_2}(V)} \right)^{\#L} \theta_{v_2}^{w(L)}\cF^V_{v_2}(L),
\end{equation*}
where $\#L$ is the number of components of $L$.
\end{Proposition}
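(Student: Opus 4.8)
The plan is to reduce everything to the single sign by which the two ribbon elements differ. By Lemma~\ref{other_ribbons}, since $v_1$ and $v_2$ are ribbon elements for the same $(\Hopf,R)$, we may write $v_1 = v_2 s$ with $s$ central, grouplike, and $s^2 = 1$. As $V$ is simple, $s$ acts on $V$ by a scalar $\epsilon$, and $\epsilon^2 = 1$. First I would record how each ingredient changes. The twist scalars satisfy $\theta_{v_1}(V) = \epsilon\,\theta_{v_2}(V)$. Since $u = \mu(S\otimes \id)R_{21}$ depends only on $R$, the pivotal elements $g_i = v_i^{-1}u$ of Proposition~\ref{makeg} satisfy $g_1 = s^{-1}g_2$, so (using $\epsilon^{-1}=\epsilon$) $g_1$ and $g_2$ act on $V$, and on $V^*$, by the ratio $\epsilon$. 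Because the pivotal structure $p_V$ is built from $g$, we get $p_V^{(1)} = \epsilon\,p_V^{(2)}$, and chasing the definition of the Frobenius--Schur indicator (for self-dual $V$; otherwise both indicators vanish and the statement is read in that case) gives $FS_{v_1}(V)/FS_{v_2}(V) = \epsilon$. Thus the claimed right-hand factor is exactly $\epsilon^{\#L}$.

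Next I would isolate where $\cF^V_v$ depends on $v$. Among the generating morphisms, crossings go to $\Flip\circ R$ and the maps $\ev,\coev$ involve no $g$; only the quantum-trace maps do, with $\qtr$ carrying one factor of $g$ and $\coqtr$ one factor of $g^{-1}$. Hence $\qtr$ and $\coqtr$ are each scaled by $\epsilon$ when passing from $v_2$ to $v_1$, and for any diagram $D$ of $L$,
\[
\cF^V_{v_1}(L) = \epsilon^{\,a(D)}\,\cF^V_{v_2}(L),
\]
where $a(D)$ is the number of quantum-trace extrema (the caps and cups sent to $\qtr$ or $\coqtr$). Combining this with $\theta_{v_1}(V) = \epsilon\,\theta_{v_2}(V)$, the target identity becomes equivalent to the purely combinatorial statement $\epsilon^{\,a(D)+w(D)} = \epsilon^{\#L}$, i.e.\ $a(D)+w(D) \equiv \#L \pmod 2$ whenever $\epsilon = -1$.

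Finally I would prove this parity statement by showing $\epsilon^{\,a(D)+w(D)}$ is an honest oriented link invariant with value $\epsilon^{\#L}$. It is unchanged under isotopy: the only local move that creates or destroys extrema is the straightening of a zig-zag, which removes a max--min pair of the \emph{same} duality type and so changes $a(D)$ by an even number while fixing $w$; an R2 move adds two crossings of opposite sign and no extrema; an R1 curl adds one quantum-trace extremum and changes $w$ by $\pm1$, so $a+w$ changes by an even number; and R3 changes nothing. Crucially it is also unchanged under a \emph{crossing change}, which leaves every extremum in place and alters $w$ by $\pm 2$. Since every link diagram can be carried to a diagram of the split unlink by crossing changes together with isotopy, and the $\ell$-component round unlink has $a=\ell$, $w=0$, we conclude $\epsilon^{\,a(D)+w(D)} = \epsilon^{\ell} = \epsilon^{\#L}$ for all $L$, completing the proof. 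The main obstacle is the bookkeeping of $a(D)$: one must pin down the local facts that an oriented round circle contributes exactly one quantum-trace extremum (the base case) and that a single curl contributes exactly one regardless of handedness and orientation (so R1-invariance holds); once these are settled, crossing-change invariance makes the global parity automatic and the dependence on the writhe falls out for free.
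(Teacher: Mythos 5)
Your proposal is correct and follows essentially the same route as the paper: both reduce to the central grouplike involution $s=v_1v_2^{-1}$, observe that only the quantum-trace caps/cups (via $g$) and the twist normalization are affected, and then establish the parity statement $a(D)+w(D)\equiv \#L \pmod 2$ by invariance under the diagram moves and crossing changes together with the unlink computation. The only (harmless) divergence is that you derive $FS_{v_1}(V)/FS_{v_2}(V)=\epsilon$ directly from the definition of the pivotal structure rather than citing the paper's Proposition on how $FS$ changes, which if anything is better suited to the general Hopf-algebra setting of the statement.
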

\begin{proof}
By theorem \ref{all_ribbons}, $\alpha:= v_1 / v_2$ is central grouplike element of order 2.   Let $\alpha_V$ be the scalar by which $\alpha$ acts on $V$. By Proposition \ref{howFSchanges}, $\alpha_V = FS_{v_1}(V)/FS_{v_2}(V)$.  Let $g_i = v_i^{-1} u$ where $u$ is the Drinfeld element. It follows immediately from definitions that $\qtr_2 = \alpha_V^{-1} \qtr_1$ and $\coqtr_2 = \alpha_V^{-1} \coqtr_1$.

The only elementary morphisms for which $\cF^V_{v_1}$ and $\cF^V_{v_2}$ disagree are left-going cups (corresponding to $\coqtr$), left-going caps (corresponding to $\qtr$), and full-twists.  Thus, 
\begin{equation}
\theta_{v_1}^{w(L)}\cF^V_{v_1}(L) = (\alpha_V)^{-N_L} \theta_{v_2}^{w(L)}\cF^V_{v_2}(L), 
\end{equation}
where 
\begin{equation} 
N_L:= \# \{ \text{left going caps and cups} \} + w(L).
\end{equation}

It is easy to see that $N_L \pmod{2}$ is an invariant of oriented links (for example, check all the Turaev moves \cite[\S 3.2]{Ohtsuki}).  Furthermore, $N_L \pmod{2}$ doesn't change when you replace a positive crossing by a negative crossing.  Since every link can be unknotted by replacing positive crossings with negative crossings, we see that $N_L \pmod{2}$ depends only on the number of components of $L$.  By looking at the $k$-component unlink we see that $N_L \equiv \#L \pmod{2}$.
\end{proof}

\section{Questions}

\begin{Question}
Which ribbon Hopf algebras can be endowed with a half-ribbon element?
\end{Question}

There are several aspects to this question. One could look for examples of half-ribbon Hopf algebras other than $U_q(\g)$. One could also try to find non-examples, in the sense of finding ribbon Hopf algebras that do not contain the required element $t$. Such examples exist (at least for topological ribbon Hopf algebras). For instance we showed in Section \ref{no_half} that $U_q(\mathfrak{sl}_2)$, with the standard ribbon element, cannot be made into a half-ribbon Hopf algebra. However, it can be modified by multiplying the ribbon element by a central grouplike element, and then it does have the required $t$. One could also ask if there are examples of ribbon Hopf algebras that cannot be made into half-ribbon Hopf algebras, even allowing this sort of modification. More ambitiously, one could look for a general method of determining when a ribbon Hopf algebra $\Hopf$ contains an element $t$ such that $(\Hopf, t)$ is a half-twist Hopf algebra.

In the current work we have mainly considered $U_q(\g)$, which is infinite dimensional, and only has a topological half-ribbon element in the sense that $t$ only belongs to a completion of $U_q(\g)$. We feel it would be interesting to look at the case of finite dimensional Hopf algebras as well.

\begin{Question}
Fix a Hopf algebra $\Hopf$. Is there a natural set of conditions one can impose on an element $t \in \Hopf$ which guarantees that $(\Hopf, t)$ is a half-ribbon Hopf algebra?
\end{Question}

We would like to be able to start with a Hopf algebra, which is not a-priori quasi-triangular, and endow it with a ribbon (and half-ribbon) structure by finding a certain $t \in \Hopf$. One can of course write down the conditions $t$ needs to satisfy by insisting that $(t^{-1} \otimes t^{-1}) \Delta(t)$ is a quasi-triangular structure, and $t^{-2}$ is a ribbon element. However, these are very difficult to deal with, so the real question is to find nicer conditions on $t$. This would give a new method of constructing quasi-triangular Hopf algebras. 

\begin{Question}
What happens if you weaken the conditions on $t$?
\end{Question}

Checking that $(t^{-1}\otimes t^{-1})\Delta{t}$ is a quasi-triangular structure is difficult, but for some applications one can weaken this condition by insisting only that
\begin{enumerate}
\item For any representation $V$ and $W$ of $\Hopf$, the map \begin{equation} \Flip \circ (t^{-1} \otimes t^{-1} ) \Delta(t): V \otimes W \rightarrow W \otimes V \end{equation} is an isomorphism.

\item $t^{-2}$ acts as a ribbon element, in the sense that 
\begin{equation}
t^{-2} \circ mult \circ (S \otimes 1) \left( (t^{-1} \otimes t^{-1} ) \Delta(t) \right) 
\end{equation}
is grouplike, and so can be used to develop a theory of quantum trace. 
\end{enumerate}
These conditions seem easier to check, and a Hopf algebra $\Hopf$ with such a $t$ already has some nice structure. This could be used, for instance, in studying coboundary categories (see \cite{D}).

\end{document}